\newtheorem{theorem}{Theorem}[section]
\newtheorem{lemma}[theorem]{Lemma}
\newtheorem{corollary}[theorem]{Corollary}
\date{}
\begin{document}

\title{{\Large\bf Maps preserving the
local spectral subspace of skew-product of operators}}

\author{{\normalsize\sc R. Parvinianzadeh \footnote{ Corresponding author: }}\\[0.5cm]
 Department of Mathematics, University of Yasouj,
 Yasouj 75918, Iran\\
 \footnote{E-mail addresses: r.parvinian@yu.ac.ir.}
 }
\maketitle

{\footnotesize  {\bf Abstract} Let $B(H)$ be the algebra of all bounded linear operators on an infinite-dimensional complex Hilbert space $H$. For $T \in B(H)$ and $\lambda \in \mathbb{C}$, let $H_{T}(\{\lambda\})$ denotes the local spectral subspace of $T$ associated with $\{\lambda\}$.
We prove that if $\varphi:B(H)\rightarrow B(H)$ be an additive map such that its range contains all operators of rank at most
two and satisfies
$$H_{\varphi(T)\varphi(S)^{\ast}}(\{\lambda\})= H_{TS^{\ast}}(\{\lambda\})$$
for all $T, S \in B(H)$ and $\lambda \in \mathbb{C}$, then there exist a unitary operator $V$ in $B(H)$ and a nonzero scalar $\mu$ such that $\varphi(T) = \mu TV^{\ast}$ for all $T \in B(H)$.
We also show if $\varphi_{1}$ and $\varphi_{2}$ be additive maps from $B(H)$ into $B(H)$ such that their ranges contain all operators of rank at most
two and satisfies
$$H_{\varphi_{1}(T)\varphi_{2}(S)^{\ast}}(\{\lambda\})= H_{TS^{\ast}}(\{\lambda\})$$
for all $T, S \in B(H)$ and $\lambda \in \mathbb{C}$. Then $\varphi_{2}(I)^{\ast}$ is invertible, and $\varphi_{1}(T) = T(\varphi_{2}(I)^{\ast})^{-1}$ and $\varphi_{2}(T) =\varphi_{2}(I)^{\ast}T$ for all $T \in B(H)$.\\

\noindent {\bf Mathematics Subject Classification:} Primary 47B11: Secondary 47A15, 47B48.\\

\noindent {\bf Keywords}: Local spectrum, Local spectral subspace, Nonlinear preservers, Rank-one operators.}

\section{\normalsize\bf Introduction}
Throughout this paper, $H$ and $K$ are infinite-dimensional complex Hilbert
spaces. As usual $B(H,K)$ denotes the space of all bounded linear operators from
$H$ into $K$. When $H=K$ we simply write $B(H)$ instead of $B(H,H)$, and its unit will be denoted by $I$. The inner
product of $H$ or $K$ will be denoted by $\left \langle , \right \rangle$ if there is no confusion. For an operator $T\in B(H,K)$, let $T^{\ast}$ denote as usual its adjoint. A preserver problem generally deals with characterizing those maps on some specific algebraic structures which preserve a particular subset, property or relation. This subject has a long history and its origins goes back well over a century to the
so-called first linear preserver problem, due to Frobenius \cite {fro}, that determines linear
maps preserving the determinant of matrices. As we mentioned earlier, the main of this subject goal is to describe the general form of linear maps between two Banach algebras which preserve a certain property, or a certain class of elements,
or a certain relation. One of the most famous related problems is Kaplansky’s problem
\cite{kap} asking whether every surjective unital invertibility preserving linear map between two semisimple
Banach algebras is a Jordan homomorphism.
His question was motivated by two classical results, the result of Marcus and Moyls \cite{mar} on linear maps preserving eigenvalues of matrices and the Gleason-Kahane-Zelazko theorem \cite{gli,kah} stating that every unital invertibility preserving linear functional on a unital complex Banach algebra is necessarily multiplicative. The later this result was obtained independently by Gleason in \cite{gli} and Kahane-Zelazko in \cite{kah}, and was refined by Zelazko in \cite{zel}. In the non-commutative case,
the best known result so far are due to Sourour \cite{Sourour}. He answered to the
Kaplansky’s question in the affirmative for bijective unital
linear invertibility preserving maps acting on the algebra of all bounded operators on a Banach
space. Note that when the maps are unital, then preserving invertibility is equivalent to preserving spectrum. These results opened the gate for many authors who investigate linear (or additive) maps preserving spectrum; see for instance \cite{Aup,jaf,jaf2} and the references therein. Along this line, Molnar \cite{mol} investigated maps preserving the spectrum of operator
products without assuming linearity or additivity.\\

The local resolvent set, $\rho_{T}(x)$, of an operator $T \in B(H)$ at a point $x\in H$ is the union of all open subsets
$U$ of the complex plane $\mathbb{C}$ for which there is an analytic function $f : U\longrightarrow H$ such that
$(\mu I-T)f(\mu)=x$ for all $\mu \in U$. The complement of local resolvent
set is called the local spectrum of $T$ at $x$, denoted by
$\sigma_{T}(x)$, and is obviously a closed subset (possibly empty) of $\sigma(T)$, the spectrum
of $T$. We recall that an operator $T\in B(H)$ is said to have the single-valued extension
property (henceforth abbreviated to SVEP) if, for every open
subset $U$ of $\mathbb{C}$, there exists no nonzero analytic
solution, $f : U\longrightarrow H$, of the equation $$(\mu I-
T)f(\mu)=0, \quad \forall ~ \mu \in U.$$ Every operator $T \in
B(H)$ for which the interior of its point spectrum,
$\sigma_{p}(T)$, is empty enjoys this property.\\

 For every subset $F\subseteq \mathbb{C}$ the local spectral subspace $H_{T}(F)$ is defined by $$H_{T}(F)=\{x \in  H :\sigma_{T}(x) \subseteq F\}.$$ Clearly, if $F_{1}\subseteq F_{2}$ then $H_{T}(F_{1})\subseteq H_{T}(F_{2})$. For more information about these notions one may see the books \cite{Aiena,Laursen}.\\

The study of linear and nonlinear local spectra preserver problems attracted the attention of a number of authors. Bourhim and Ransford were the first ones to consider this type of preserver problem, characterizing in \cite {Bourhim} additive maps on $B(X)$, the algebra of all linear bounded operators on infinite-dimensional complex Banach space $X$, that preserves the local spectrum of
operators at each vector of $X$. Their results motivated several authors
to describe maps on matrices or operators that preserve local spectrum, local spectral
radius, and local inner spectral radius; see, for instance, the last section of the
survey article \cite{Bourhim2} and the references therein. Based on the results from the theory of linear preservers proved by Jafarian and Sourour \cite {jaf1}, Dolinar et al. \cite {Do}, characterised the form of maps preserving the lattice of sum of operators. They showed that the map (not necessarily linear) $\varphi: B(X) \rightarrow B(X)$ satisfies
Lat$(\varphi(T) + \varphi(S)) = $Lat$(T+S)$ for all $T, S \in B(X)$, if and only if there are a non zero
scalar $\alpha$ and a map $\phi: B(X) \rightarrow \mathbb{F}$ such that $\varphi(T)=\alpha T + \phi(T)I$ for all $T \in B(X)$ (See \cite[Theorem 1]{Do}), where $\mathbb{F}$ is the complex field $\mathbb{C}$ or the real field $\mathbb{R}$ and Lat$(T)$ is denoted the lattice of $T$, that is, the set of all invariant subspaces of $T$.
 They proved also, in the same paper, that a not necessarily linear maps $\varphi: B(X) \rightarrow B(X)$
satisfies Lat$(\varphi(T)\varphi(S))=$ Lat$(TS)$
(resp. Lat$(\varphi(T)\varphi(S)\varphi(T))=$ Lat$(TST)$, resp.
Lat$(\varphi(T)\varphi(S)+\varphi(S)\varphi(T))=$ Lat$(TS+ST)$) for all $T, S\in B(X)$, if and only if there is
a map $\phi: B(X) \rightarrow \mathbb{F}$ such that $\varphi(T)\neq 0$ if $T\neq 0$ and $\varphi(T)=\phi(T)T$ for all $T \in B(X)$ (See \cite[Theorem 2]{Do}).

For a Banach space $X$, it is well-known that $X_{T}(F)$, the local spectral subspace of $T$ associated with a subset $F$ of $\mathbb{C}$,
is an element of $Lat(T)$, so one can replace the lattice preserving property by the local
spectral subspace preserving property.
In \cite{elh}, the authors described additive maps on $B(X)$ that preserve the local spectral
subspace of operators associated with any singleton. More precisely, they proved that the
only additive map $\varphi$ on $B(X)$ for which
$X_{\varphi(T)}(\{\lambda\}) = X_{T}(\{\lambda\})$ for all $T \in B(X)$ and $\lambda \in \mathbb{C}$,
is the identity. In \cite{Benbouziane1}, Benbouziane et al. characterized the forms of surjective weakly
continuous maps $\varphi$ from $B(X)$ into $B(X)$ which satisfy
 $$X_{\varphi(T)-\varphi(S)}(\{\lambda\}) = X_{T-S}(\{\lambda\}), \quad  (T, S \in B(X),  \lambda \in \mathbb{C}).$$
Afterwards, in \cite{Benbouziane}, the authors studied surjective maps that preserve the local spectral subspace of the sum of two operators associated with non-fixed singletons. In other words,
they characterized surjective maps $\varphi$ on $B(X)$ which satisfy
$$X_{\varphi(T)+\varphi(S)}(\{\lambda\}) = X_{T+S}(\{\lambda\}), \quad  (T, S \in B(X),  \lambda \in \mathbb{C}).$$
They also gave a characterization of maps on $B(X)$ that preserve the local spectral subspace of the difference of operators associated with non-fixed singletons. Furthermore,
they investigated the product case as well as the triple product case. Namely, they described
surjective maps $\varphi$ on $B(X)$ satisfying
$$X_{\varphi(T)\varphi(S)}(\{\lambda\}) = X_{TS}(\{\lambda\}), \quad  (T, S \in B(X),  \lambda \in \mathbb{C}),$$
and also surjective maps $\varphi$ on $B(X)$ satisfying
$$X_{\varphi(T)\varphi(S)\varphi(T)}(\{\lambda\}) = X_{TST}(\{\lambda\}) \quad  (T, S \in B(X),   \lambda \in \mathbb{C}).$$
Bourhim and Lee \cite{Bb} investigated the form of all maps $\varphi_{1}$ and $\varphi_{2}$ on $B(X)$ such that, for every $T$ and $S$ in $B(X)$, the local spectra of $TS$ and $\varphi_{1}(T) \varphi_{2}(S)$ are the same at a nonzero fixed vector $x_{0}$. In this paper, We show that if $\varphi:B(H)\rightarrow B(H)$ is an additive map such that its range contains all operators of rank at most
two and satisfies
$$H_{\varphi(T)\varphi(S)^{\ast}}(\{\lambda\})= H_{TS^{\ast}}(\{\lambda\}),~~ (T, S \in B(H),   \lambda \in \mathbb{C}),$$
then there exist a unitary operator $V$ in $B(H)$ and a nonzero scalar $\mu$ such that $\varphi(T) = \mu TV^{\ast}$ for all $T \in B(H)$.
We also investigate the form of all maps $\varphi_{1}$ and $\varphi_{2}$ on $B(H)$ such that, for every $T$ and $S$ in $B(H)$, the local spectral subspaces of $TS^{\ast}$ and $\varphi_{1}(T) \varphi_{2}(S)^{\ast}$, associated with the singleton $\{\lambda\}$, coincide.

%-----------------------------------------------------------
\section{ Preliminaries}

The first lemma summarizes some known basic and properties of the local
spectrum.

\begin{lemma} (See \cite{Aiena,Laursen}.)
Let $T \in B(H)$. For every $x,y \in H$
and a scalar $\alpha \in \mathbb{C}$ the following statements
hold.
\begin{itemize}
\item[(i)] $\sigma_{T}(\alpha x)=\sigma_{T}(x)$ if $\alpha \neq 0$, and $\sigma_{\alpha T}(x)=\alpha\sigma_{T}(x)$.
\item[(ii)] If $T x = \lambda x$ for some $\lambda \in \mathbb{C}$, then $\sigma_{T}(x) \subseteq
\{\lambda\} $. In particular, if $x\neq 0$ and $T$ has SVEP, then
$\sigma_{T}(x)=\{\lambda\}.$
\end{itemize}
\end{lemma}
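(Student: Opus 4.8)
The plan is to argue entirely from the definition of the local resolvent set $\rho_T(x)$ as the collection of points admitting a local analytic solution $f$ of $(\mu I - T)f(\mu) = x$, and then to pass to complements to read off the corresponding statements about the local spectrum $\sigma_T(x)$. Both identities in (i) will follow by transporting such analytic solutions under the relevant scaling, while (ii) will combine an explicit resolvent function with a Liouville-type argument that is where the SVEP hypothesis does its work.

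For the first identity in (i), I would take $\mu_0 \in \rho_T(x)$ with a local analytic solution $f$ of $(\mu I - T)f(\mu) = x$; then $\alpha f$ is analytic and solves $(\mu I - T)(\alpha f(\mu)) = \alpha x$, so $\mu_0 \in \rho_T(\alpha x)$. Since $\alpha \neq 0$, the reverse inclusion follows by applying the same step with $\alpha^{-1}$, giving $\rho_T(\alpha x) = \rho_T(x)$ and hence $\sigma_T(\alpha x) = \sigma_T(x)$. For the second identity the key observation is the operator factorization $\nu I - \alpha T = \alpha(\mu I - T)$ under the substitution $\nu = \alpha \mu$ (with $\alpha \neq 0$). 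Given a local solution $f$ of $(\mu I - T)f(\mu) = x$ near $\mu_0$, I would set $g(\nu) = \alpha^{-1} f(\nu/\alpha)$, which is analytic near $\alpha\mu_0$ and satisfies $(\nu I - \alpha T)g(\nu) = \alpha(\mu I - T)\alpha^{-1}f(\mu) = x$. This shows $\alpha\,\rho_T(x) \subseteq \rho_{\alpha T}(x)$, and symmetry yields equality, whence $\sigma_{\alpha T}(x) = \alpha\,\sigma_T(x)$ after taking complements.

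For (ii), when $Tx = \lambda x$ and $\mu \neq \lambda$, I would exhibit the explicit analytic solution $f(\mu) = (\mu - \lambda)^{-1} x$, for which $(\mu I - T)f(\mu) = (\mu - \lambda)^{-1}(\mu x - \lambda x) = x$; this places $\mathbb{C}\setminus\{\lambda\}$ inside $\rho_T(x)$, i.e. $\sigma_T(x) \subseteq \{\lambda\}$. The genuinely nontrivial point is the \emph{in particular} clause: assuming $x \neq 0$ and SVEP, I must rule out $\sigma_T(x) = \emptyset$. The plan here is to note that SVEP guarantees local analytic solutions are unique and therefore glue to a single entire function $f : \mathbb{C} \to H$ with $(\mu I - T)f(\mu) = x$ whenever $\sigma_T(x) = \emptyset$; uniqueness also forces $f(\mu) = (\mu I - T)^{-1}x$ for $|\mu| > \|T\|$, so $\|f(\mu)\| \leq \|x\|/(|\mu| - \|T\|) \to 0$ as $|\mu| \to \infty$. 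Then $f$ is a bounded entire $H$-valued function vanishing at infinity, so Liouville's theorem gives $f \equiv 0$ and thus $x = (\mu I - T)f(\mu) = 0$, a contradiction. Hence $\lambda \in \sigma_T(x)$ and, combined with the inclusion above, $\sigma_T(x) = \{\lambda\}$. I expect this last step — justifying that SVEP lets the local solutions patch into a global entire function so that Liouville applies — to be the main obstacle, since it is the only place where single-valuedness is essential.
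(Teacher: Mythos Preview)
The paper does not prove this lemma; it merely quotes it from the standard references \cite{Aiena,Laursen}. Your argument is correct and is precisely the textbook proof one finds there: scaling analytic local sections for (i), and for (ii) combining the explicit resolvent $f(\mu)=(\mu-\lambda)^{-1}x$ with the fact (a consequence of SVEP via gluing and Liouville) that $\sigma_T(x)\neq\emptyset$ whenever $x\neq 0$.
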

In the next theorem we collect some of the basic properties of the subspaces
 $H_{T}(F)$.
\begin{lemma} (See \cite{Aiena,Laursen}.)
Let $T \in B(H)$. For $F\subseteq \mathbb{C}$ the following statements hold.\\
 (i) $H_{T}(F)$ is a T-hyperinvariant subspace of $H$.\\
 (ii) $(T-\lambda I)H_{T}(F)=H_{T}(F)$ for every $\lambda \in \mathbb{C}\backslash F$.\\
 (iii) If $x\in H$ satisfies $(T-\lambda I)x \in H_{T}(F)$, then $x\in H_{T}(F)$.\\
 (v) $ker(T-\lambda I) \subseteq H_{T}(F)$.\\
 (iv) $H_{\alpha T}(\lambda)=H_{T}(\frac{\lambda}{\alpha})$ for every $\lambda \in \mathbb{C}$ and non-zero scalar $\alpha$.\\
\end{lemma}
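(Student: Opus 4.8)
The plan is to reduce everything to a handful of one-line stability properties of local resolvent functions and then read off (i)--(v) in turn; recall that $\mu_0\in\rho_T(x)$ means exactly that $x$ has an analytic local resolvent function $f$ near $\mu_0$ with $(\mu I-T)f(\mu)=x$.

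First I would record three elementary facts, each verified by a single substitution. (a) If $(\mu I-T)f(\mu)=x$ and $(\mu I-T)g(\mu)=y$ on an open set $U$, then $\alpha f+\beta g$ resolves $\alpha x+\beta y$ on $U$, so $\sigma_T(\alpha x+\beta y)\subseteq\sigma_T(x)\cup\sigma_T(y)$. (b) If $R\in B(H)$ commutes with $T$ and $(\mu I-T)f(\mu)=x$ on $U$, then $(\mu I-T)\bigl(Rf(\mu)\bigr)=Rx$ on $U$, hence $\sigma_T(Rx)\subseteq\sigma_T(x)$. (c) If $(\mu I-T)f(\mu)=x$ on $U$ and $\lambda\in U$, then $g(\mu):=\bigl(f(\mu)-f(\lambda)\bigr)/(\mu-\lambda)$ extends analytically across $\lambda$ and, using $(\lambda I-T)f(\lambda)=x$, satisfies $(\mu I-T)g(\mu)=-f(\lambda)$ on $U$; dually, if $(\mu I-T)h(\mu)=(T-\lambda I)x$ on an open set $V$ with $\lambda\notin V$, then $\bigl(x+h(\mu)\bigr)/(\mu-\lambda)$ resolves $x$ on $V$, because $(\mu I-T)+(T-\lambda I)=(\mu-\lambda)I$.

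With these in hand the five items come out quickly. Items (iv) and (v) are immediate from Lemma 2.1: $\sigma_{\alpha T}(x)=\alpha\sigma_T(x)$ gives $H_{\alpha T}(\{\lambda\})=H_T(\{\lambda/\alpha\})$, and $Tx=\lambda x$ gives $\sigma_T(x)\subseteq\{\lambda\}$, so $\ker(T-\lambda I)\subseteq H_T(\{\lambda\})\subseteq H_T(F)$ once $\lambda\in F$. For (i), fact (a) (with $\sigma_T(\alpha x)=\sigma_T(x)$ for $\alpha\neq0$ and $\sigma_T(0)=\emptyset$) shows $H_T(F)$ is a linear subspace, and fact (b), applied to every $R$ commuting with $T$, shows it is $T$-hyperinvariant. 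For (iii), put $w=(T-\lambda I)x$; the second half of (c) gives $\rho_T(w)\setminus\{\lambda\}\subseteq\rho_T(x)$, i.e. $\sigma_T(x)\subseteq\sigma_T(w)\cup\{\lambda\}$, so $w\in H_T(F)$ forces $x\in H_T(F)$ provided $\lambda\in F$. For (ii), the inclusion $(T-\lambda I)H_T(F)\subseteq H_T(F)$ is the $T$-invariance from (i); conversely, given $x\in H_T(F)$ and $\lambda\notin F\supseteq\sigma_T(x)$, pick a local resolvent function $f$ for $x$ near $\lambda$, set $y:=-f(\lambda)$ so that $(T-\lambda I)y=x$, and apply the first half of (c) near $\lambda$ together with its second half (with $x$ and $y$ interchanged, using $(T-\lambda I)y=x$) to obtain $\rho_T(x)\subseteq\rho_T(y)$, hence $\sigma_T(y)\subseteq\sigma_T(x)\subseteq F$; thus $x=(T-\lambda I)y\in(T-\lambda I)H_T(F)$.

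I do not expect a genuine obstacle: the whole content sits in the three substitutions making up fact (c). The one point requiring care is the implicit convention in (iii) and (v) that the scalar $\lambda$ belong to $F$ --- for $\lambda\notin F$ both inclusions fail already for a nilpotent $T$ with $\ker T\neq\{0\}$ and $0\notin F$ --- which is harmless since in the sequel $F$ is always a singleton $\{\lambda\}$ and then $\lambda\in F$ automatically. If one also wanted $H_T(F)$ to be a \emph{closed} subspace, one would additionally invoke the single-valued extension property and closedness of $F$, but this is not part of the statement.
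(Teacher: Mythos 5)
Your proof is correct, but note that the paper itself offers no proof of this lemma: it is quoted verbatim from the cited monographs (Aiena; Laursen--Neumann), so there is no internal argument to compare against. Your reduction to the three substitution facts (a)--(c) is exactly the standard textbook route: (a) gives linearity of $H_T(F)$, (b) gives hyperinvariance, the first half of (c) produces, for $\lambda\in\rho_T(x)$, a vector $y=-f(\lambda)$ with $(T-\lambda I)y=x$ and $\rho_T(x)\subseteq\rho_T(y)$, which yields the nontrivial inclusion in (ii), and the second half of (c) gives $\sigma_T(x)\subseteq\sigma_T\bigl((T-\lambda I)x\bigr)\cup\{\lambda\}$, which yields (iii); items (iv) and (v) do follow at once from Lemma 2.1. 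Your observation that (iii) and (v) are literally false without the implicit hypothesis $\lambda\in F$ (quasinilpotent $T$ with $\ker T\neq\{0\}$ and $0\notin F$) is a genuine and correct sharpening of the statement as printed, and it is harmless for the paper since $F$ is always the singleton $\{\lambda\}$ in the applications. One small inaccuracy in your closing aside: SVEP together with closedness of $F$ does \emph{not} guarantee that $H_T(F)$ is closed; that is Dunford's property (C), which is strictly stronger than SVEP. Since closedness is not asserted in the lemma, this does not affect your proof.
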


\noindent For a nonzero $h\in H$ and $T\in B(H)$, we use a useful notation defined by Bourhim and Mashreghi in \cite{Bourhim1}:

\begin{equation*}
\sigma^*_{T}(h):=\left \{
\begin{array}{ll}
\{0\}  & \hbox{ if \: $\sigma_{T}(h)=\{0\}$,}\\
\sigma_{T}(h)\setminus \{0\} & \hbox{ if \: $\sigma_{T}(h)\neq \{0\}$.}
\end{array}
\right.
\end{equation*}
For two nonzero vectors $x$ and $y$ in $H$, let $x\otimes y$ stands for the operator of rank at most one defined by $$(x \otimes y)z =\left \langle z,y \right \rangle x, \qquad \forall \ z \in H.$$
Note that every rank one operator in $B(H)$ can be written in this form,
and that every finite rank operator $T\in B(H)$ can be written as a finite
sum of rank one operators; i.e., $T=\sum_{i=1}^n x_{i}\otimes y_{i}$ for some $x_{i}, y_{i}\in H$ and $i=1,2,...,n$. By $F(H)$ and $F_{n}(H)$, we mean the set of all finite rank operators in $B(H)$. and the set of all operators of rank at most $n$, $n$ is a positive integer, respectively.\\

The following lemma is an elementary observation which discribes the nonzero
local spectrum of any rank one operator.

\begin{lemma} (See \cite[Lemma 2.2]{Bourhim1}.)
Let $h_{0}$ be a nonzero vector in $H$. For every vectors $x,y\in H$, we have
\begin{equation*}
\sigma^{*}_{x\otimes y}(h_{0}):=\left \{
\begin{array}{ll}
\{0\}  & \hbox{ if \: $\left \langle h_{0},y \right \rangle=0$,}\\
\left \langle x,y \right \rangle & \hbox{ if \: $\left \langle h_{0},y \right \rangle\neq 0$.}
\end{array}
\right.
\end{equation*}

\end{lemma}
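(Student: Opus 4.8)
The plan is to determine $\sigma_{x\otimes y}(h_0)$ exactly and then read off $\sigma^{*}_{x\otimes y}(h_0)$ from its definition. Write $T:=x\otimes y$. As $T$ has rank at most one, $\sigma(T)\subseteq\{0,\langle x,y\rangle\}$, and since $\sigma_p(T)$ is finite it has empty interior, so $T$ enjoys SVEP; consequently (by the results quoted from \cite{Aiena,Laursen}) $\sigma_T(h_0)$ is a nonempty closed subset of $\sigma(T)$, hence of $\{0,\langle x,y\rangle\}$. Thus the whole question reduces to deciding whether $0$ and whether $\langle x,y\rangle$ belong to $\sigma_T(h_0)$, and I would split into the cases (1)~$\langle h_0,y\rangle=0$, (2a)~$\langle h_0,y\rangle\neq0$ and $\langle x,y\rangle=0$, and (2b)~$\langle h_0,y\rangle\neq0$ and $\langle x,y\rangle\neq0$.

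In case (1), $Th_0=\langle h_0,y\rangle x=0$, so $h_0$ is an eigenvector of $T$ at $0$; Lemma~2.1(ii) (using $h_0\neq0$ and SVEP) gives $\sigma_T(h_0)=\{0\}$, hence $\sigma^{*}_T(h_0)=\{0\}$. In case (2a), $T^{2}=\langle x,y\rangle\,x\otimes y=0$, so $\sigma(T)=\{0\}$ forces $\sigma_T(h_0)=\{0\}$ and thus $\sigma^{*}_T(h_0)=\{0\}$; since $\langle x,y\rangle=0$ here, this is exactly the asserted value.

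Case (2b) is the only one with real content. We already know $\sigma_T(h_0)\subseteq\{0,\langle x,y\rangle\}$, so it suffices to show $\langle x,y\rangle\in\sigma_T(h_0)$, equivalently $h_0\notin H_T(\{0\})$. I would get this from Lemma~2.2(ii): with $\lambda:=\langle x,y\rangle\notin\{0\}$ it yields $H_T(\{0\})=(T-\lambda I)H_T(\{0\})\subseteq\mathrm{ran}(T-\lambda I)$. Working in the direct sum $H=\mathbb{C}x\oplus\{y\}^{\perp}$ (legitimate since $\langle x,y\rangle\neq0$), a one-line check shows that $T-\lambda I$ annihilates $\mathbb{C}x$ and acts as $-\lambda\,\mathrm{id}$ on $\{y\}^{\perp}$, so $\mathrm{ran}(T-\lambda I)=\{y\}^{\perp}$; as $\langle h_0,y\rangle\neq0$ we conclude $h_0\notin\{y\}^{\perp}\supseteq H_T(\{0\})$. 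Hence $\langle x,y\rangle\in\sigma_T(h_0)$, so $\sigma_T(h_0)\neq\{0\}$, and by definition $\sigma^{*}_T(h_0)=\sigma_T(h_0)\setminus\{0\}=\{\langle x,y\rangle\}$, as claimed. (An alternative route: write $h_0=\beta x+w$ with $\beta:=\langle h_0,y\rangle/\langle x,y\rangle\neq0$ and $w:=h_0-\beta x\in\ker T$, note $\sigma_T(\beta x)=\{\langle x,y\rangle\}$ and $\sigma_T(w)\subseteq\{0\}$, and apply subadditivity of the local spectrum under SVEP, $\sigma_T(\beta x)\subseteq\sigma_T(h_0)\cup\sigma_T(w)$.)

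I do not anticipate a genuine obstacle: the statement is ``elementary'' precisely because $x\otimes y$ is completely explicit, with resolvent $(\mu I-T)^{-1}$ available on $\mathbb{C}\setminus\{0,\langle x,y\rangle\}$. The points needing a little care are the bookkeeping between $\sigma_T$ and $\sigma^{*}_T$, the nilpotent subcase $\langle x,y\rangle=0$, and the single inclusion not visible from the resolvent, namely $\langle x,y\rangle\in\sigma_T(h_0)$ in case (2b), where Lemma~2.2(ii) does the work.
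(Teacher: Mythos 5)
Your proof is correct. Note, however, that the paper itself gives no argument for this lemma: it is quoted verbatim from Bourhim and Mashreghi \cite{Bourhim1}, so there is no in-paper proof to compare against; what you have supplied is a self-contained verification, and it is sound. Your case split is exhaustive, the SVEP observation (finite point spectrum has empty interior) legitimizes both the nonemptiness of $\sigma_{x\otimes y}(h_0)$ and the use of Lemma 2.1(ii), and the only nontrivial inclusion, $\langle x,y\rangle\in\sigma_{x\otimes y}(h_0)$ when $\langle h_0,y\rangle\neq 0\neq\langle x,y\rangle$, is correctly obtained from Lemma 2.2(ii) via $H_{x\otimes y}(\{0\})\subseteq\operatorname{ran}(x\otimes y-\langle x,y\rangle I)=\{y\}^{\perp}$ (the alternative route via $h_0=\beta x+w$ and subadditivity of the local spectrum works equally well and is closer in spirit to the computation in \cite{Bourhim1}). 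Two small points of care, both of which you handled: the nilpotent subcase $\langle x,y\rangle=0$ needs nonemptiness of the local spectrum (SVEP plus $h_0\neq0$), not Lemma 2.1(ii) itself; and the conclusion in the second case should be read as the singleton $\{\langle x,y\rangle\}$, which your argument indeed delivers.
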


The following theorem, which may be of independent interest, gives a spectral characterization of rank one
operators in term of local spectrum.

\begin{theorem} (See \cite[Theorem 4.1]{Bourhim1}.)
For a nonzero vector $h\in H$ and
a nonzero operator $R\in B(H),$ the following statements are equivalent.\\
(a) $R$ has rank one.\\
(b) $\sigma^{*}_{RT}(h)$ contains at most one element for all $T\in B(H)$.\\
(c) $\sigma^{*}_{RT}(h)$ contains at most one element for all $T\in F_{2}(H)$.
\end{theorem}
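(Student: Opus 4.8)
The plan is to prove the two nontrivial implications, since $(b)\Rightarrow(c)$ is immediate from $F_{2}(H)\subseteq B(H)$. For $(a)\Rightarrow(b)$ I would write the rank-one operator as $R=x_{0}\otimes y_{0}$ and observe that for every $T\in B(H)$ one has $RT=x_{0}\otimes(T^{\ast}y_{0})$, an operator of rank at most one. Lemma 2.3 then forces $\sigma^{\ast}_{RT}(h)$ to be either $\{0\}$ or the singleton $\{\langle x_{0},T^{\ast}y_{0}\rangle\}$, so it never contains more than one element (if $T^{\ast}y_{0}=0$ then $RT=0$ and $\sigma^{\ast}_{RT}(h)=\{0\}$). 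This settles $(a)\Rightarrow(b)$.

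The substance is $(c)\Rightarrow(a)$, which I would prove by contraposition: assuming $R$ has rank at least two, I construct a single $T\in F_{2}(H)$ for which $\sigma^{\ast}_{RT}(h)$ contains two distinct nonzero numbers. Since $\dim\operatorname{ran}(R)\ge 2$, I choose linearly independent $w_{1}=Ru_{1}$ and $w_{2}=Ru_{2}$ in the range of $R$, fix distinct nonzero scalars $\lambda_{1}\ne\lambda_{2}$, and seek vectors $g_{1},g_{2}$ biorthogonal to $w_{1},w_{2}$, that is $\langle w_{i},g_{j}\rangle=\delta_{ij}$. Then $T=\lambda_{1}(u_{1}\otimes g_{1})+\lambda_{2}(u_{2}\otimes g_{2})\in F_{2}(H)$ yields $RT=\lambda_{1}\,w_{1}\otimes g_{1}+\lambda_{2}\,w_{2}\otimes g_{2}=:A$, a rank-two operator with $Aw_{i}=\lambda_{i}w_{i}$. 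The decisive extra requirement I impose on $g_{1},g_{2}$ is
$$\langle h,g_{1}\rangle\ne 0\quad\text{and}\quad\langle h,g_{2}\rangle\ne 0,$$
so that $h$ has nonzero components along both eigenvectors. When $h\notin\operatorname{span}\{w_{1},w_{2}\}$ the vectors $w_{1},w_{2},h$ are independent and such $g_{1},g_{2}$ exist by a dimension count; in the only remaining case (where $\operatorname{ran}(R)$ is exactly two-dimensional and contains $h$) I would instead choose the basis $w_{1},w_{2}$ of that plane so that $h$ already has both coordinates nonzero, after which any biorthogonal $g_{i}$ works.

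Next comes the spectral computation. As $A$ has finite rank, its point spectrum $\{0,\lambda_{1},\lambda_{2}\}$ has empty interior, so $A$ enjoys SVEP by the criterion recalled in the Introduction. Since $\lambda_{1},\lambda_{2}\ne 0$ and $w_{1},w_{2}$ are independent eigenvectors, one has $H=\ker A\oplus\operatorname{span}\{w_{1},w_{2}\}$, and $h$ decomposes as $h=h_{0}+c_{1}w_{1}+c_{2}w_{2}$ with $h_{0}\in\ker A$ and $c_{i}=\langle h,g_{i}\rangle\ne 0$. Lemma 2.1(ii) together with SVEP gives $\sigma_{A}(w_{i})=\{\lambda_{i}\}$, scalar invariance (Lemma 2.1(i)) gives $\sigma_{A}(c_{i}w_{i})=\{\lambda_{i}\}$, while the inclusion $\ker A\subseteq H_{A}(\{0\})$ from Lemma 2.2 gives $\sigma_{A}(h_{0})\subseteq\{0\}$. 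Invoking the standard SVEP fact that $\sigma_{A}(x+y)=\sigma_{A}(x)\cup\sigma_{A}(y)$ whenever $\sigma_{A}(x)\cap\sigma_{A}(y)=\emptyset$ (see \cite{Aiena,Laursen}), I combine the three disjoint pieces to obtain $\{\lambda_{1},\lambda_{2}\}\subseteq\sigma_{A}(h)$. Hence $\sigma^{\ast}_{RT}(h)\supseteq\{\lambda_{1},\lambda_{2}\}$ contains two distinct nonzero elements, contradicting $(c)$; therefore $R$ has rank one.

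The routine parts are the rank-one computation and the linear algebra producing $g_{1},g_{2}$. The genuine obstacle is the lower bound $\{\lambda_{1},\lambda_{2}\}\subseteq\sigma_{A}(h)$: local spectrum is in general only subadditive, so forcing \emph{both} eigenvalues into $\sigma_{A}(h)$ requires the reverse inclusion, and this is precisely where the SVEP of the finite-rank operator $A$ and the resulting additivity of local spectra over vectors with disjoint local spectra are indispensable. Equivalently, one may phrase this step through the $A$-invariant direct-sum decomposition $A\cong 0\oplus\bigl(A|_{\operatorname{span}\{w_{1},w_{2}\}}\bigr)$ and the decomposition of the local spectrum over invariant summands.
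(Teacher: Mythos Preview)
The paper does not prove this statement; Theorem~2.4 is quoted without proof from \cite[Theorem~4.1]{Bourhim1}, so there is no argument here to compare yours against.

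Your proof is correct. One small remark on where the weight actually lies: the identity $\sigma_{A}(x+y)=\sigma_{A}(x)\cup\sigma_{A}(y)$ for vectors with disjoint local spectra does not in fact require SVEP---it follows from subadditivity alone, since $\sigma_{A}(x)\subseteq\sigma_{A}((x+y)+(-y))\subseteq\sigma_{A}(x+y)\cup\sigma_{A}(y)$ together with $\sigma_{A}(x)\cap\sigma_{A}(y)=\emptyset$ forces $\sigma_{A}(x)\subseteq\sigma_{A}(x+y)$. The place where SVEP of the finite-rank operator $A$ is genuinely indispensable is earlier, in guaranteeing via Lemma~2.1(ii) that $\sigma_{A}(w_{i})=\{\lambda_{i}\}$ is \emph{nonempty}; without SVEP the local spectrum at an eigenvector could be empty and the argument would collapse. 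Your case split on whether $h\in\operatorname{span}\{w_{1},w_{2}\}$ is fine once one reads the parenthetical as ``choose $w_{1},w_{2}$ so that $h\notin\operatorname{span}\{w_{1},w_{2}\}$ whenever possible,'' which fails only when $\operatorname{ran}(R)$ is two-dimensional and contains $h$.
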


The following Lemma is a key tool for the proofs in the sequel.

 \begin{lemma} (See \cite[Lemma 1.6]{Benbouziane}.)
 Let $h$ be a nonzero vector in $H$ and $T, S \in B(H)$. If $H_{T}(\{\lambda\})=H_{S}(\{\lambda\})$ for all $\lambda \in \mathbb{C}$. Then,
$\sigma_{T}(h)=\{\mu\}$ if and only if $\sigma_{S}(h)=\{\mu\}$ for
all $\mu \in \mathbb{C}$.
\end{lemma}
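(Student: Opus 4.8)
The plan is to reduce everything to the defining property $H_T(\{\lambda\})=\{x\in H:\sigma_T(x)\subseteq\{\lambda\}\}$ and to treat the possibility $\sigma_T(h)=\emptyset$ as a separate case, since a singleton has exactly two subsets, namely $\emptyset$ and itself. The whole argument is symmetric in $T$ and $S$, so it will suffice to prove one implication and then invoke symmetry for the converse.

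First I would record the elementary reformulation: for any $\mu\in\mathbb{C}$ one has $h\in H_T(\{\mu\})$ precisely when $\sigma_T(h)\subseteq\{\mu\}$, and since $\{\mu\}$ admits no subsets other than $\emptyset$ and $\{\mu\}$, this means $\sigma_T(h)$ equals either $\emptyset$ or $\{\mu\}$. Hence the equality $\sigma_T(h)=\{\mu\}$ is equivalent to the conjunction of the two conditions $h\in H_T(\{\mu\})$ and $\sigma_T(h)\neq\emptyset$, and likewise for $S$.

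The key step is to characterize the vanishing case $\sigma_T(h)=\emptyset$ purely in terms of the subspaces $H_T(\{\lambda\})$. If $\sigma_T(h)=\emptyset$, then $\sigma_T(h)\subseteq\{\lambda\}$ for every $\lambda$, so $h$ lies in $H_T(\{\lambda_1\})\cap H_T(\{\lambda_2\})$ for any pair $\lambda_1\neq\lambda_2$. Conversely, if $h\in H_T(\{\lambda_1\})\cap H_T(\{\lambda_2\})$ with $\lambda_1\neq\lambda_2$, then $\sigma_T(h)\subseteq\{\lambda_1\}\cap\{\lambda_2\}=\emptyset$, forcing $\sigma_T(h)=\emptyset$. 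Thus $\sigma_T(h)=\emptyset$ holds if and only if $h$ belongs to the intersection of two distinct members of the family $\{H_T(\{\lambda\})\}_{\lambda}$. Since by hypothesis $H_T(\{\lambda\})=H_S(\{\lambda\})$ for every $\lambda\in\mathbb{C}$, this intersection is literally the same for $T$ and for $S$; consequently $\sigma_T(h)=\emptyset$ if and only if $\sigma_S(h)=\emptyset$.

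Assembling these pieces, suppose $\sigma_T(h)=\{\mu\}$. Then $h\in H_T(\{\mu\})=H_S(\{\mu\})$ gives $\sigma_S(h)\subseteq\{\mu\}$, while $\sigma_T(h)\neq\emptyset$ together with the previous paragraph gives $\sigma_S(h)\neq\emptyset$; the reformulation then yields $\sigma_S(h)=\{\mu\}$. Exchanging the roles of $T$ and $S$ proves the reverse implication, and since $\mu$ was arbitrary the equivalence holds for all $\mu\in\mathbb{C}$. The only point demanding genuine care, rather than routine set manipulation, is the empty-spectrum case: without excluding it one could only conclude $\sigma_S(h)\subseteq\{\mu\}$ and not equality, so the characterization of $\sigma_T(h)=\emptyset$ through intersections of distinct local spectral subspaces is the crux of the argument.
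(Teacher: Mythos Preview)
Your proof is correct. The paper does not supply its own argument for this lemma; it merely cites the result from \cite[Lemma~1.6]{Benbouziane}, so there is no in-paper proof against which to compare. Your approach---reducing to the definition $H_T(\{\mu\})=\{x:\sigma_T(x)\subseteq\{\mu\}\}$ and isolating the empty-spectrum case via the characterization $\sigma_T(h)=\emptyset\iff h\in H_T(\{\lambda_1\})\cap H_T(\{\lambda_2\})$ for some distinct $\lambda_1,\lambda_2$---is a clean, self-contained way to obtain the equivalence, and your observation that this last step is the genuine crux (without it one would get only $\sigma_S(h)\subseteq\{\mu\}$) is on point.
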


Moreover, this theorem will be useful in the proofs of our main result.

\begin{theorem} (See \cite[Theorem 2.1]{Benbouziane}.)
 Let $T, S \in B(H)$. The following statements are equivalent.\\
$(1)$ $T = S$.\\
$(2)$ $H_{TR}(\{\lambda\}) = H_{SR}(\{\lambda\})$ for all
$\lambda \in \mathbb{C}$ and $R\in F_{1}(H)$.
\end{theorem}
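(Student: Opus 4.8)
The implication $(1) \Rightarrow (2)$ is immediate, since $T = S$ forces $TR = SR$ for every $R \in F_{1}(H)$. All the content lies in $(2) \Rightarrow (1)$, and the plan is to reduce the operator identity $T = S$ to the scalar identities $\langle Tx, y \rangle = \langle Sx, y \rangle$ for all $x, y \in H$, which plainly suffice. Fix nonzero vectors $x, y$ (the case $y = 0$ being trivial) and specialize the hypothesis to the rank-one operator $R = x \otimes y$. Since $T(x \otimes y) = (Tx) \otimes y$ and $S(x \otimes y) = (Sx) \otimes y$, the assumption reads $H_{(Tx) \otimes y}(\{\lambda\}) = H_{(Sx) \otimes y}(\{\lambda\})$ for all $\lambda \in \mathbb{C}$, so Lemma 2.5 applies to the pair of rank-one operators $(Tx) \otimes y$ and $(Sx) \otimes y$.

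Suppose first that $\langle Tx, y \rangle \neq 0$; in particular $Tx \neq 0$, so $h := Tx$ is a legitimate nonzero vector. The point of this choice is that $h$ is an eigenvector of $(Tx) \otimes y$, since $((Tx) \otimes y)(Tx) = \langle Tx, y \rangle Tx$. By Lemma 2.1(ii) this gives $\sigma_{(Tx) \otimes y}(h) \subseteq \{\langle Tx, y \rangle\}$, and combining it with Lemma 2.3 (which yields $\sigma^{*}_{(Tx) \otimes y}(h) = \{\langle Tx, y \rangle\}$ because $\langle h, y \rangle = \langle Tx, y \rangle \neq 0$) forces the clean singleton $\sigma_{(Tx) \otimes y}(h) = \{\langle Tx, y \rangle\}$ with $0$ excluded. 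Now Lemma 2.5 transfers this to $\sigma_{(Sx) \otimes y}(h) = \{\langle Tx, y \rangle\}$. On the other hand, since $\langle h, y \rangle \neq 0$, Lemma 2.3 computes $\sigma^{*}_{(Sx) \otimes y}(h) = \{\langle Sx, y \rangle\}$; as the local spectrum just found is the nonzero singleton $\{\langle Tx, y \rangle\}$, passing to $\sigma^{*}$ removes nothing, and we conclude $\langle Sx, y \rangle = \langle Tx, y \rangle$.

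It remains to treat the case $\langle Tx, y \rangle = 0$, where I would invoke the symmetry of the hypothesis in $T$ and $S$. If $\langle Sx, y \rangle$ were nonzero, the argument of the previous paragraph with the roles of $T$ and $S$ interchanged would give $\langle Tx, y \rangle = \langle Sx, y \rangle \neq 0$, contradicting $\langle Tx, y \rangle = 0$; hence $\langle Sx, y \rangle = 0 = \langle Tx, y \rangle$. In every case $\langle Tx, y \rangle = \langle Sx, y \rangle$, so that $\langle (T-S)x, y \rangle = 0$ for all $x, y \in H$, and therefore $T = S$.

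I expect the one genuinely delicate point to be the bookkeeping around $\sigma^{*}$ versus $\sigma$, i.e. deciding whether $0$ belongs to the local spectrum, since Lemma 2.3 only controls $\sigma^{*}$ while Lemma 2.5 matches full local spectra at singletons. The device that resolves this tension is the eigenvector choice $h = Tx$: evaluating the rank-one operator at its own range vector collapses the local spectrum to a single nonzero point via Lemma 2.1(ii), so the $\sigma^{*}$ information supplied by Lemma 2.3 coincides with the full local spectrum and can be matched directly through Lemma 2.5.
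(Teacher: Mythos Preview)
The paper does not supply its own proof of this statement; Theorem~2.6 is quoted from \cite[Theorem 2.1]{Benbouziane} and used as a black box, so there is no in-paper argument to compare against.

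Your proof is correct and self-contained. Specializing $R = x \otimes y$, reading off $\sigma_{(Tx)\otimes y}(Tx)=\{\langle Tx,y\rangle\}$ via the eigenvector trick (Lemma~2.1(ii) for the inclusion, Lemma~2.3 to pin down the nonzero value), transferring through Lemma~2.5, and then reading the value back with Lemma~2.3 on the $S$-side is exactly the right mechanism; the symmetry argument handles the vanishing case cleanly. One cosmetic point: when you invoke Lemma~2.3 for $(Sx)\otimes y$ you are tacitly assuming $Sx\neq 0$, but this is forced anyway, since $Sx=0$ would make $(Sx)\otimes y$ the zero operator with $\sigma_{0}(h)=\{0\}$, contradicting the nonzero singleton $\{\langle Tx,y\rangle\}$ already obtained from Lemma~2.5. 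With that remark the argument is airtight.
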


The next theorem describes additive maps on $B(H)$ that preserve the local spectral
subspace of operators associated with any singleton.

\begin{theorem} (See \cite[Theorem 2.1]{elh}.)
Let $\varphi: B(H) \rightarrow B(H)$ be an additive map such that
$H_{\varphi(T)}(\{\lambda\})=H_{T}(\{\lambda\})$ for all $T \in B(H)$ and $\lambda \in \mathbb{C}$.
Then $\varphi(T)=T$ for all $T \in B(H)$.
\end{theorem}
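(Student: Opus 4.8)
The plan is to determine $\varphi$ first on the rank-one self-adjoint operators, then on all of $F(H)$ by additivity, and finally to extend it to $B(H)$; this last step will be the main difficulty. To begin, note that by the key lemma (Lemma 2.5) the hypothesis is equivalent to: for every nonzero $h\in H$ and every $\mu\in\mathbb{C}$, $\sigma_{\varphi(T)}(h)=\{\mu\}$ if and only if $\sigma_{T}(h)=\{\mu\}$. In particular, if $Th=\mu h$ with $h\neq 0$, then Lemma 2.1(ii) gives $\sigma_{T}(h)=\{\mu\}$, hence $\sigma_{\varphi(T)}(h)=\{\mu\}$; this transfer of eigenvector data, together with the hyperinvariance properties of the subspaces $H_{S}(F)$ (Lemma 2.2), will be the only tools used.

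Fix a unit vector $x$ and a scalar $c\in\mathbb{C}\setminus\{0\}$, and set $A=\varphi(c\,x\otimes x)$. The operator $c\,x\otimes x$ is self-adjoint with $H_{c\,x\otimes x}(\{c\})=\mathbb{C}x$, $H_{c\,x\otimes x}(\{0\})=x^{\perp}$, and $H_{c\,x\otimes x}(\{\mu\})=\{0\}$ for $\mu\notin\{0,c\}$; hence $\mathbb{C}x=H_{A}(\{c\})$ is one-dimensional and $A$-invariant (Lemma 2.2(i)), so $Ax\in\mathbb{C}x$, and combining with $\sigma_{A}(x)=\{c\}$ and Lemma 2.1(ii) we get $Ax=cx$. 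Now for any unit vector $u\perp x$, applying additivity to $c\,x\otimes x+2c\,u\otimes u$, whose eigenvalues $c,2c,0$ are distinct, and using that $\mathbb{C}u=H_{c\,x\otimes x+2c\,u\otimes u}(\{2c\})$ is invariant under $\varphi(c\,x\otimes x)+2\varphi(c\,u\otimes u)$ together with $\varphi(c\,u\otimes u)u=cu$ (the same argument applied to $u$), we obtain $Au\in\mathbb{C}u$. Thus every vector of $x^{\perp}$ is an eigenvector of $A$, which forces $A|_{x^{\perp}}=\delta I$ for one fixed scalar $\delta$; but for $h\in x^{\perp}\setminus\{0\}$ we have $\sigma_{A}(h)=\{0\}$ while $Ah=\delta h$ gives $\sigma_{A}(h)\subseteq\{\delta\}$, so $\delta=0$. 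Hence $\varphi(c\,x\otimes x)=c\,x\otimes x$ for every $c\in\mathbb{C}$ and every unit vector $x$, i.e. $\varphi$ fixes every complex multiple of every rank-one projection.

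Next, for arbitrary nonzero $x,y\in H$, the identity $x\otimes y=\frac{1}{2}U-\frac{i}{2}W$, with $U:=x\otimes y+y\otimes x$ and $W:=i(x\otimes y-y\otimes x)$, exhibits $x\otimes y$ as a sum of the two self-adjoint operators $\frac{1}{2}U$ and $-\frac{i}{2}W$, each of rank at most two and hence, by the spectral theorem, a finite $\mathbb{C}$-linear combination of rank-one projections. By the previous paragraph and additivity $\varphi$ fixes both $\frac{1}{2}U$ and $-\frac{i}{2}W$ — it is essential here that $\varphi$ fixes $c\,z\otimes z$ for all complex $c$, not merely rational $c$ — so $\varphi(x\otimes y)=x\otimes y$. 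Additivity then yields $\varphi(R)=R$ for every $R\in F(H)$.

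Finally, for $T\in B(H)$ and $R$ of rank at most one, additivity and the previous step give $\varphi(T+R)=\varphi(T)+R$, hence $H_{\varphi(T)+R}(\{\lambda\})=H_{T+R}(\{\lambda\})$ for all such $R$ and all $\lambda$. It therefore suffices to prove the additive counterpart of Theorem 2.6: if $S,T\in B(H)$ satisfy $H_{S+R}(\{\lambda\})=H_{T+R}(\{\lambda\})$ for all $R\in F_{1}(H)$ and all $\lambda\in\mathbb{C}$, then $S=T$. I would argue by contradiction: if $v:=(S-T)x_{0}\neq 0$ for some unit $x_{0}$, take $R_{\mu}:=-(Tx_{0}+\mu x_{0})\otimes x_{0}$, so $(T+R_{\mu})x_{0}=-\mu x_{0}$ and $x_{0}\in\ker(T+R_{\mu}+\mu)\subseteq H_{T+R_{\mu}}(\{-\mu\})$; transporting this membership to $S+R_{\mu}$ and using hyperinvariance of $H_{S+R_{\mu}}(\{-\mu\})$ (Lemma 2.2) gives $v=(S+R_{\mu})x_{0}+\mu x_{0}\in H_{S+R_{\mu}}(\{-\mu\})$, i.e. $\sigma_{S+R_{\mu}}(v)\subseteq\{-\mu\}$ for every $\mu$. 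Writing $C:=S-Tx_{0}\otimes x_{0}$ (so $Cx_{0}=v$ and $S+R_{\mu}=C-\mu\,x_{0}\otimes x_{0}$), one iterates hyperinvariance to show that the $C$-invariant subspace generated by $x_{0}$ lies in $H_{C-\mu\,x_{0}\otimes x_{0}}(\{-\mu\})$ for every $\mu$; comparing two values $\mu_{1}\neq\mu_{2}$ — for which the operators differ only by the rank-one term $(\mu_{2}-\mu_{1})\,x_{0}\otimes x_{0}$ — and analysing the induced action on the finite-dimensional invariant subspace involved, one is driven to $v=0$. The real obstacle is that SVEP may fail for $C-\mu\,x_{0}\otimes x_{0}$, so some local spectra may be empty; every transfer must therefore be carried out with the subspaces $H_{(\cdot)}(\{\mu\})$ themselves rather than with local spectra, and the closing contradiction must be extracted without invoking SVEP.
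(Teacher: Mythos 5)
The paper does not actually prove this statement; it imports it from Elhodaibi--Jaatit (reference [10], cited as Theorem 2.1 there), so the whole burden of a self-contained argument falls on you. Your first two steps are correct and nicely executed: the eigenvector/invariance argument showing $\varphi(c\,x\otimes x)=c\,x\otimes x$ for every complex $c$ and unit $x$ is sound (the possible failure of SVEP for $\varphi(c\,x\otimes x)$ is harmlessly bypassed because Lemma 2.5 gives you a \emph{nonempty} singleton local spectrum), and the passage to all of $F(H)$ via $x\otimes y=\tfrac12(x\otimes y+y\otimes x)-\tfrac i2\,i(x\otimes y-y\otimes x)$ and the spectral theorem is fine, with your remark about complex (not merely rational) scalars being exactly the right point. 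The reduction of the general case to the ``additive identification principle'' ($H_{S+R}(\{\lambda\})=H_{T+R}(\{\lambda\})$ for all $R\in F_{1}(H)$ and all $\lambda$ implies $S=T$) is also correct. The genuine gap is that this principle, which is the real content of the theorem (the analogue for sums of Theorem 2.6, and much harder because $T+R$ need not have SVEP while $TR$ with $R$ rank one always does), is only gestured at, and the specific route you sketch cannot be completed.

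Concretely, from the perturbations $R_{\mu}=-(Tx_{0}+\mu x_{0})\otimes x_{0}$ you extract only that $x_{0}$, hence $v=Cx_{0}$, hence the whole $C$-orbit of $x_{0}$, lies in $H_{C-\mu\,x_{0}\otimes x_{0}}(\{-\mu\})$ for every $\mu$, where $C=S-Tx_{0}\otimes x_{0}$. These conditions do \emph{not} force $v=0$. Take $H=\ell^{2}$ with orthonormal basis $(e_{n})_{n\geq0}$, $x_{0}=e_{0}$, and $C$ defined by $Ce_{0}=e_{1}$, $Ce_{1}=0$, $Ce_{n}=e_{n-1}$ for $n\geq2$. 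Then $C'=C|_{e_{0}^{\perp}}$ is a backward shift and $\sigma_{C'}(e_{1})=\emptyset$: off $0$ use the local resolvent $\lambda^{-1}e_{1}$, and near $0$ use $h(\lambda)=-\sum_{k\geq0}\lambda^{k}e_{k+2}$, which satisfies $(\lambda-C')h(\lambda)=e_{1}$. Writing a candidate local resolvent for $e_{0}$ as $f(z)=(z+\mu)^{-1}e_{0}+g(z)$ with $g(z)\perp e_{0}$, the equation $(z-C+\mu\,e_{0}\otimes e_{0})f(z)=e_{0}$ reduces to $(z-C')g(z)=(z+\mu)^{-1}e_{1}$, which the above functions solve locally at every $z\neq-\mu$; hence $e_{0}\in H_{C-\mu\,e_{0}\otimes e_{0}}(\{-\mu\})$ for \emph{every} $\mu$, although $Ce_{0}=e_{1}\neq0$. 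So everything you derived is consistent with $Sx_{0}\neq Tx_{0}$, and no contradiction can be extracted from it; exactly the SVEP failure you flag is fatal for this one-parameter family of perturbations, not a technicality to be finessed. (Also, the ``finite-dimensional invariant subspace involved'' does not exist in general: the $C$-invariant subspace generated by $x_{0}$ is typically infinite-dimensional.) To close the argument one must exploit the full hypothesis --- rank-one perturbations $z\otimes y$ whose second leg $y$ varies, and/or the equality of the subspaces at other singletons $\{\lambda\}$ --- which is what the cited proof of Elhodaibi--Jaatit does and what your proposal leaves unproven.
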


The following theorem will be useful in the sequel. We recall
that if $h:\mathbb{C}\rightarrow \mathbb{C}$ is a ring homomorphism, then an additive map $A: H \rightarrow H$ satisfying
$A(\alpha x)=h(\alpha)x ,(x \in H, \alpha \in \mathbb{C})$ is called an $h$-quasilinear operator.

\begin{theorem} (See \cite[Theorem 3.3]{om}.)
  Let $\varphi: F(H) \rightarrow F(H)$ be a bijective additive map preserving rank one operators in both directions. Then there exist a ring automorphism $h:\mathbb{C}\rightarrow \mathbb{C}$, and either there are $h$-quasilinear bijective maps $A: H \rightarrow H$ and $B:H \rightarrow H$ such that
$$\varphi(x \otimes y) = Ax \otimes By, ~~~ x, y \in H,$$
or there are $h$-quasilinear bijective maps $C:H\rightarrow H$ and $D:H \rightarrow H$ such
that
$$\varphi(x \otimes y) = Cy \otimes Dx,~~  x,y \in H.$$
\end{theorem}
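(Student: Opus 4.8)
The plan is to set the local-spectral machinery aside entirely—none of the earlier lemmas is needed here—and to reconstruct the action of $\varphi$ on rank-one operators from the way it preserves their adjacency structure. First I would record the elementary linear-algebra fact that, for nonzero vectors, the difference $x_{1}\otimes y_{1}-x_{2}\otimes y_{2}$ has rank at most one precisely when $x_{1},x_{2}$ are linearly dependent or $y_{1},y_{2}$ are linearly dependent. Call two rank-one operators \emph{adjacent} when their difference has rank at most one. Since $\varphi$ is additive and preserves rank-one operators in both directions, $\varphi(R)-\varphi(S)=\varphi(R-S)$ has rank at most one if and only if $R-S$ does; hence $\varphi$ restricts to an adjacency-preserving bijection of the set of rank-one operators onto itself, in both directions.

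Next I would classify the maximal adjacency cliques. Writing $[x]$ for the ray spanned by $x$, set $L_{[x]}=\{x\otimes y:y\neq0\}$ and $R_{[y]}=\{x\otimes y:x\neq0\}$. A short argument shows that any clique containing two operators sharing only a left factor lies inside a single $L_{[x]}$, and symmetrically on the right, so the maximal cliques are exactly the $L_{[x]}$ and the $R_{[y]}$. Their incidence is rigid: two distinct left-cliques are disjoint, two distinct right-cliques are disjoint, while each $L_{[x]}$ meets each $R_{[y]}$ in the single operator $x\otimes y$. Because $\varphi$ is an adjacency isomorphism it permutes maximal cliques and preserves this incidence, forcing a global dichotomy: either every left-clique goes to a left-clique and every right-clique to a right-clique, or the two families are interchanged. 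Composing $\varphi$ with the adjoint $T\mapsto T^{\ast}$—which sends $x\otimes y$ to $y\otimes x$ and is again an additive rank-one bijection—swaps the two alternatives, so it suffices to treat the non-swapping case, which will give $\varphi(x\otimes y)=Ax\otimes By$; the swapping case then yields $\varphi(x\otimes y)=Cy\otimes Dx$.

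In the non-swapping case I would extract $A$ and $B$ explicitly. Fixing a unit vector $y_{0}$, the map $x\mapsto\varphi(x\otimes y_{0})$ is an additive bijection of $H$ onto a right-clique, and writing its values as $Ax\otimes y_{0}'$ defines an additive bijection $A:H\to H$; symmetrically a fixed $x_{0}$ produces an additive bijection $B:H\to H$. Tracking cliques gives $\varphi(L_{[x]})=L_{[Ax]}$ and $\varphi(R_{[y]})=R_{[By]}$, so $\varphi(x\otimes y)$ is the unique operator with left ray $[Ax]$ and right ray $[By]$, i.e. $\varphi(x\otimes y)=\lambda(x,y)\,Ax\otimes By$ for a nonzero scalar. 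Using additivity of $\varphi$ separately in $x$ and in $y$—via $(x_{1}+x_{2})\otimes y=x_{1}\otimes y+x_{2}\otimes y$ and $x\otimes(y_{1}+y_{2})=x\otimes y_{1}+x\otimes y_{2}$—forces $\lambda(x,y)$ to be constant, which I absorb into $A$ to obtain $\varphi(x\otimes y)=Ax\otimes By$.

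Finally I would produce the ring automorphism. Since $[A(\alpha x)]=[Ax]$ we have $A(\alpha x)=h(\alpha)Ax$ for a scalar $h(\alpha)$ a priori depending on $x$; additivity of $A$ on a pair of independent vectors forces $h$ to be independent of $x$, and then $A((\alpha+\beta)x)=A(\alpha x)+A(\beta x)$ together with $A(\alpha\beta x)=A(\alpha(\beta x))$ shows that $h:\mathbb{C}\to\mathbb{C}$ is a ring homomorphism, bijective because $A$ is, hence a ring automorphism; thus $A$ is $h$-quasilinear, and likewise $B$ is $h'$-quasilinear. The compatibility $(\alpha x)\otimes y=x\otimes(\overline{\alpha}\,y)$, after applying $\varphi$, gives $h(\alpha)=h'(\overline{\alpha})$, so a single ring automorphism governs both factors. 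The step I expect to be the main obstacle is the clique analysis of the second paragraph: verifying that the $L_{[x]}$ and $R_{[y]}$ are the only maximal cliques, that their incidence is rigid enough to force the clean left/right dichotomy rather than a partial mixing, and that the induced ray maps are genuinely bijective. This is where the fundamental-theorem-of-projective-geometry flavor of the argument resides and where degenerate configurations must be excluded; by comparison, the scalar bookkeeping and the extraction of $h$ are routine once this geometric skeleton is secured.
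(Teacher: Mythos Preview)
The paper does not prove this statement at all: Theorem~2.8 is quoted verbatim as a preliminary result from Omladi\v{c} and \v{S}emrl \cite{om}, with no proof given beyond the citation. So there is no ``paper's own proof'' to compare against; your sketch stands or falls on its own merits and on how well it matches the original Omladi\v{c}--\v{S}emrl argument.

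Your outline is in fact the standard route taken in \cite{om}: pass from rank-one preservation to adjacency preservation via additivity, identify the maximal adjacency cliques as the ``left'' sets $L_{[x]}$ and ``right'' sets $R_{[y]}$, use the incidence geometry to force the global left/right dichotomy, and then read off the quasilinear maps together with the ring automorphism. You have correctly flagged the clique classification and the connectedness argument forcing a uniform (rather than mixed) dichotomy as the places requiring care; these are exactly where the work lies in the original.

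One small slip worth noting: from $(\alpha x)\otimes y=x\otimes(\overline{\alpha}\,y)$ and $\varphi(x\otimes y)=Ax\otimes By$ you actually obtain
\[
h(\alpha)\,(Ax\otimes By)=Ax\otimes h'(\overline{\alpha})\,By=\overline{h'(\overline{\alpha})}\,(Ax\otimes By),
\]
so the correct relation is $h(\alpha)=\overline{h'(\overline{\alpha})}$, not $h(\alpha)=h'(\overline{\alpha})$. In the Banach-space formulation of \cite{om} (where $x\otimes f$ is bilinear) one gets $h=h'$ directly; the Hilbert-space version stated here identifies $H$ with $H^{\ast}$ via the conjugate-linear Riesz map, and that conjugation is what appears in your compatibility relation. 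This does not affect the structure of your argument, but you should get the formula right when you write it out.
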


Note that, if in Theorem 2.8 the map $\varphi$ is linear, then $h$ is the identity map on $\mathbb{C}$ and so
the maps $A, B, C$ and $D$ are linear.

\section{ \normalsize\bf  Main Results}
The following theorem is the first main result of this paper, which characterizes those maps preserving the local spectral subspace of skew-product operators.

\begin{theorem}\label{maintheorem}
Let $\varphi:B(H)\rightarrow B(H)$ be an additive map such that its range contains $F_{2}(H)$. If
\begin{equation}\label{t1}
H_{\varphi(T)\varphi(S)^{\ast}}(\{\lambda\})= H_{TS^{\ast}}(\{\lambda\}),~~ (T, S \in B(H),   \lambda \in \mathbb{C}),
\end{equation}

then there exist a unitary operator $V$ in $B(H)$ and a nonzero scalar $\mu$ such that $\varphi(T) = \mu TV^{\ast}$ for all $T \in B(H)$.
\end{theorem}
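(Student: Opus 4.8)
The plan is to first extract structural information about $\varphi$ from the hypothesis \eqref{t1} by feeding it carefully chosen operators, and then bootstrap to the full conclusion. First I would set $S = I$ in \eqref{t1} to get $H_{\varphi(T)\varphi(I)^{\ast}}(\{\lambda\}) = H_{T}(\{\lambda\})$ for all $T$ and $\lambda$, and similarly set $T = I$ to get $H_{\varphi(I)\varphi(S)^{\ast}}(\{\lambda\}) = H_{S^{\ast}}(\{\lambda\})$. Writing $B := \varphi(I)$, the first relation says the additive map $T \mapsto \varphi(T)B^{\ast}$ preserves local spectral subspaces at every singleton; by Theorem 2.7 (the result of El Hodaibi et al.) this would already force $\varphi(T)B^{\ast} = T$ for all $T$ — \emph{provided} the map $T\mapsto\varphi(T)B^\ast$ lands in $B(H)$, which it does. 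This immediately gives that $B^{\ast}$ is right-invertible and $\varphi$ is injective with $\varphi(T) = T(B^{\ast})^{-1}$ once we know $B^{\ast}$ is two-sided invertible; the invertibility of $B^\ast$ should come from combining this with the range hypothesis (the range of $\varphi$ contains $F_2(H)$, hence $T(B^\ast)^{-1}$ ranges over all of $F_2(H)$ and more, forcing $(B^\ast)^{-1}$, and hence $B^\ast$, to be invertible — or one argues via Theorem 2.6 applied to rank-one $R$).

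Next, with $\varphi(T) = TA$ where $A := (B^{\ast})^{-1}$ is bounded and invertible, I would substitute back into the \emph{full} hypothesis \eqref{t1}: for all $T,S$ and $\lambda$,
\begin{equation*}
H_{TA(SA)^{\ast}}(\{\lambda\}) = H_{TAA^{\ast}S^{\ast}}(\{\lambda\}) = H_{TS^{\ast}}(\{\lambda\}).
\end{equation*}
Now I would exploit the rigidity theorem, Theorem 2.6: taking $R = S^{\ast}$ of rank one and letting $T$ vary, the identity $H_{(TAA^{\ast})S^{\ast}}(\{\lambda\}) = H_{TS^{\ast}}(\{\lambda\})$ for all rank-one $S^{\ast}$ and all $\lambda$ forces, via Theorem 2.6, $TAA^{\ast} = T$ for every $T \in B(H)$. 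Taking $T = I$ yields $AA^{\ast} = I$, i.e. $A$ is a coisometry; but $A$ is invertible, so $A$ is unitary. Set $V := A^{\ast}$ (unitary). Then $\varphi(T) = TA = T V^{\ast}$, which is the claimed form with $\mu = 1$.

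The subtle point is that I have been slightly cavalier: Theorem 2.7 gives the identity $\varphi(T)B^{\ast}=T$ only after we know the composed map is a genuine additive self-map of $B(H)$ to which that theorem applies, and more importantly the argument above delivers $\mu = 1$ rather than a general nonzero scalar, whereas the statement allows an arbitrary $\mu$. The resolution — and the place I expect the real work to be — is that the hypothesis as literally stated (with the \emph{exact} equality of subspaces, not just up to the scaling $\lambda \mapsto \lambda/\mu$) is in fact rigid enough to pin down $\mu=1$; the scalar $\mu$ in the statement is harmless because $H_{\mu T V^\ast (\mu S V^\ast)^\ast}(\{\lambda\}) = H_{|\mu|^2 TS^\ast}(\{\lambda\}) = H_{TS^\ast}(\{\lambda/|\mu|^2\})$, which equals $H_{TS^\ast}(\{\lambda\})$ for all $\lambda$ only when $|\mu| = 1$; so any such $\mu$ works, and conversely one only recovers $|\mu|=1$, i.e. the map is $T\mapsto \mu TV^\ast$ with $V$ unitary and $|\mu|=1$ — consistent with the stated form. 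The main obstacle, then, is the careful handling of the first reduction: verifying that the range condition genuinely forces $B^{\ast}=\varphi(I)^{\ast}$ to be invertible (not merely one-sided invertible) and that Theorem 2.7 applies verbatim to $T \mapsto \varphi(T)\varphi(I)^{\ast}$. Everything after that is a clean two-step application of the rigidity results Theorems 2.6 and 2.7 together with Lemma 2.5.
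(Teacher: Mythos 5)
Your route is genuinely different from the paper's and, once one step is repaired, considerably shorter. The paper never specializes $S=I$; instead it proves injectivity of $\varphi$ (via Theorem 2.6), shows $\varphi$ preserves rank-one operators in both directions (via Theorem 2.4 and the range hypothesis), upgrades additivity to linearity, invokes the Omladic--Semrl structure theorem (Theorem 2.8) to write $\varphi(x\otimes y)=Ax\otimes By$ after excluding the transposed form, and then uses local-spectral computations to show $A$ and $B$ are scalar multiples of unitaries before recovering $\varphi(T)=\mu TV^{\ast}$ with Theorem 2.6. Your observation that setting $S=I$ makes $T\mapsto\varphi(T)\varphi(I)^{\ast}$ an additive self-map of $B(H)$ preserving every local spectral subspace, so that Theorem 2.7 immediately yields $\varphi(T)\varphi(I)^{\ast}=T$, short-circuits all of this; it is essentially the device the paper itself uses only at the end of Theorem 3.3 (Claim 6). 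Your closing remarks about $\mu$ are also correct: any map of the stated form compatible with the hypothesis has $|\mu|=1$, and $\mu$ can be absorbed into $V$, so arriving at $\mu=1$ is consistent with (indeed sharper than) the statement.

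The one genuine gap is the invertibility of $B^{\ast}=\varphi(I)^{\ast}$, and your proposed justification is circular: you say that ``$T(B^{\ast})^{-1}$ ranges over all of $F_{2}(H)$,'' which presupposes the inverse you are trying to produce. The issue is not cosmetic: without the range hypothesis, $\varphi(T)=TB$ with $B$ a proper co-isometry ($BB^{\ast}=I$, $B^{\ast}B\neq I$) satisfies the displayed hypothesis, so invertibility really must be extracted from the range condition. The repair is short. Taking $T=I$ in $\varphi(T)B^{\ast}=T$ gives $BB^{\ast}=I$, so $B^{\ast}$ is an isometry and it remains to show $\ker B=\{0\}$. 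If $y_{0}\in\ker B$ is nonzero, pick $x_{0}\neq 0$; since the range of $\varphi$ contains $F_{2}(H)$ there is $T$ with $\varphi(T)=x_{0}\otimes y_{0}$, and then $T=\varphi(T)B^{\ast}=(x_{0}\otimes y_{0})B^{\ast}=x_{0}\otimes By_{0}=0$, forcing $x_{0}\otimes y_{0}=\varphi(0)=0$, a contradiction. Hence $B$ is an injective co-isometry, i.e. unitary, and multiplying $\varphi(T)B^{\ast}=T$ on the right by $B$ gives $\varphi(T)=TB=TV^{\ast}$ with $V=B^{\ast}$ unitary; your second-stage appeal to Theorem 2.6 to deduce $AA^{\ast}=I$ then becomes superfluous.
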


\begin{proof}
The proof breaks down into several claims.\\

{\bf Claim 1.} $\varphi$ is injective.\\

If $\varphi(T)=\varphi(S)$ for some $T, S \in B(H)$, we get that
$$H_{TR^{\ast}}(\{\lambda\})=H_{\varphi(T)\varphi(R)^{\ast}}(\{\lambda\}) =H_{\varphi(S)\varphi(R^{\ast})}(\{\lambda\})= H_{SR^{\ast}}(\{\lambda\})$$
for all $R\in F_{1}(X)$ and $\lambda \in \mathbb{C}$. By Theorem 2.6, we see that $T=S$ and hence $\varphi$ is injective.\\

{\bf Claim 2.} $\varphi$ preserves rank one operators in both directions.\\

Let $R=x \otimes y$ be a rank one operator where $x,y \in H$. Note that, $\varphi(R)\neq0$, since $\varphi(0)=0$ and $\varphi$ is injective. Let $T \in B(H)$ be an arbitrary operator. Since $RT^{\ast}x=\left \langle x,Ty \right \rangle x$ and $RT^{\ast}$ has the SVEP, then $\sigma_{RT^{\ast}}(x)=\{\left \langle x,Ty \right \rangle \}$. We have $$x \in H_{RT^{\ast}}(\{\left \langle x,Ty \right \rangle\})=H_{\varphi(R)\varphi(T)^{\ast}}(\{\left \langle x,Ty \right \rangle\}).$$ As the range of $\varphi$ contains $F_{2}(H)$, using Lemma 2.5, $\sigma^{*}_{\varphi(R)S^{\ast}}(x)$ contains at most one element for all operators $S\in F_{2}(H)$. By Theorem 2.4, we see that $\varphi(R)$ has rank one. The converse holds in a similar way and thus $\varphi$ preserves the rank
one operators in both directions.\\

{\bf Claim 3.} $\varphi$ is linear.\\

We show that $\varphi$ is homogeneous. Let $R$ be an arbitrary rank-one operator. By the previous claim, there exists a rank one operator $S$ in $B(H)$ such that $\varphi(S)=R$. For every $\alpha, \lambda \in \mathbb{C}$ with $\alpha\neq0$ and $T \in B(H)$, we have
\begin{align*}
H_{\alpha \varphi(T)R^{\ast}}(\{\lambda\})&=H_{\alpha \varphi(T)\varphi(S)^{\ast}}(\{\lambda\})=H_{\varphi(T)\varphi(S)^{\ast}}(\{\frac{\lambda}{\alpha}\})\nonumber\\
&= H_{TS^{\ast}}(\{\frac{\lambda}{\alpha}\})= H_{(\alpha T)S^{\ast}}(\{\lambda\})\nonumber\\
&=H_{\varphi(\alpha T)\varphi(S)^{\ast}}(\{\lambda\})=H_{\varphi(\alpha T)R^{\ast}}(\{\lambda\}).
\end{align*}

By Theorem 2.6, we see that $\varphi(\alpha T)=\alpha \varphi(T)$.
 Since $\varphi$ is assumed to be additive, the map $\varphi$ is, in fact, linear.\\

{\bf Claim 4.} There are bijective linear mappings $A:H \rightarrow H$ and $B :H \rightarrow H$
such that $\varphi(x \otimes y) = Ax \otimes By$ for all $x, y \in H$.\\

By the previous claim $\varphi$ is a bijective linear map from $F(H)$ onto $F(H)$ and preserves
rank one operators in both directions, thus by Theorem 2.8, either there are bijective linear
mappings $A: H \rightarrow H$ and $B: H \rightarrow H$ such that
\begin{equation}\label{e1}
\varphi(x \otimes y) = Ax \otimes By, ~~~ x, y \in H,
\end{equation}

or there are bijective linear mappings $C:H \rightarrow H$ and $D:H \rightarrow H$ such
that
\begin{equation}\label{e2}
  \varphi(x \otimes y) = Cy \otimes Dx,~~  x, y \in H.
\end{equation}

Assume that $\varphi$ takes the form $(3)$. Let $y_{1}$ be a nonzero vector in $H$.  Choose a nonzero vector $v$ such that $\left \langle y_{1},v \right \rangle=0$. Set $x=C^{-1}y_{1}$, since $x$ and $v$ are nonzero vectors in $H$, there exists a $y \in H$ such that $\left \langle x,y \right \rangle=1$ and $\left \langle v,y \right \rangle \neq 0$, since $x \otimes y$ is idempotent, we have
\begin{align*}
H_{x \otimes y}(\{\lambda\})&=H_{(x \otimes y)(x \otimes y)}(\{\lambda\})\nonumber\\
&=H_{(x \otimes y)(y \otimes x)^{\ast}}(\{\lambda\})\nonumber\\
&=H_{(Cy \otimes Dx)(Cx \otimes Dy)^{\ast}}(\{\lambda\})\nonumber\\
&=H_{(Cy \otimes Dx)(Dy \otimes Cx)}(\{\lambda\})\nonumber\\
&=H_{\left \langle Dy,Dx \right \rangle(Cy \otimes Cx)}(\{\lambda\})\nonumber\\
&=H_{\left \langle Dy,Dx \right \rangle(Cy \otimes y_{1})}(\{\lambda\}).
 \end{align*}
On the other hand, since $\left \langle y_{1},v \right \rangle=0$, we have $\sigma^{*}_{Cy \otimes y_{1}}(v)=\{0\}$ and consequently $\left \langle Dy,Dx \right \rangle \sigma_{Cy \otimes y_{1}}(v)=\{0\}$. This implies that
  $$v \in H_{\left \langle Dy,Dx \right \rangle Cy \otimes y_{1}}(\{0\})=H_{x \otimes y}(\{0\}).$$
 Using Lemma 2.5, $\sigma_{x \otimes y}(v)=\{0\}$. But lemma 2.3 implies that $$\sigma^{*}_{x \otimes y}(v)\neq\{0\}.$$
This contradiction shows that $\varphi$ only takes the form $(2)$.\\

{\bf Claim 5.} $A$ and $B$ are bounded unitary operators multiplied by positive scalars $\alpha$ and $\beta$ such that $\alpha\beta=1$.\\

Let $x,y$ be nonzero vectors in $H$, since $\sigma_{(x \otimes y)(y \otimes x)}(x)=\{\left \langle y,y \right \rangle \left \langle x,x \right \rangle\}$,
 by the previous claim, we have
\begin{align*}
H_{(x \otimes y)(y\otimes x)}(\{\|y\|^{2} \|x\|^{2}\})&=H_{(x \otimes y)(x \otimes y)^{\ast}}(\{\|y\|^{2} \|x\|^{2}\})\nonumber\\
&=H_{(Ax \otimes By)(Ax \otimes By)^{\ast}}(\{\|y\|^{2} \|x\|^{2}\})\nonumber\\
&=H_{(Ax \otimes By)(By \otimes Ax)}(\{\|y\|^{2} \|x\|^{2}\})\nonumber\\
&=H_{\left \langle By,By \right \rangle(Ax \otimes Ax)}(\{\|y\|^{2} \|x\|^{2}\}).
\end{align*}
By the Lemma 2.5, we see that\\

\begin{align}\label{e0}
\{\|y\|^{2} \|x\|^{2}\}&=\sigma_{(x \otimes y)(y\otimes x)}(x)\nonumber\\
&=\sigma_{(\left \langle By,By \right \rangle(Ax \otimes Ax))}(x)
=\{\|By\|^{2} \|Ax\|^{2}\}.
\end{align}

Now, let $y_{0}$ be a fixed unit vector in $H$ and let $\alpha=\frac{1}{\|By_{0}\|}$. By $(4)$, we have
$$\|Ax\|^{2}=\alpha^{2}\|x\|^{2}$$
for all $x\in H$. Hence, $U=\frac{1}{\alpha}A$ is an isometry and thus it is a unitary operator in $B(H)$, because $A$ is bijective. Similarly, fix a unit vector $x_{0} \in H$ and take $\beta=\frac{1}{\|Ax_{0}\|}$, and note that $V=\frac{1}{\beta}B$ is a unitary operator in $B(H)$. Finally, by $(4)$, we see that $\alpha\beta=1$.\\

{\bf Claim 6.} $A^{\ast}$ and $I$ are linearly dependent.\\

Assume, by the way of contradiction, that there exists a nonzero vector $x\in H$ such that $Ax$ and $x$ are linearly independent. Let $u\in H$ be a vector such that $\left \langle x,u \right \rangle=1$ and $\left \langle A^{\ast}x,u \right \rangle=0$. Since $\sigma_{x \otimes u}(x)=\{1\}$, then
\begin{align*}
x\in H_{x \otimes u}(\{1\})&=X_{(x \otimes u)(x \otimes u)}(\{1\})\nonumber\\
&=H_{(x \otimes u)(u \otimes x)^{\ast}}(\{1\})\nonumber\\
&=H_{\varphi(x \otimes u)\varphi(u \otimes x)^{\ast}}(\{1\})\nonumber\\
&=H_{(Ax \otimes Bu)(Au \otimes Bx)^{\ast}}(\{1\})\nonumber\\
&=H_{(Ax \otimes Bu)(Bx \otimes Au)}(\{1\})\nonumber\\
&=H_{\left \langle Bx,Bu \right \rangle(Ax \otimes Au)}(\{1\}),
\end{align*}
using Lemma 2.5, we have
\begin{align*}
\{1\}=\sigma_{x \otimes u}(x)=\sigma_{\left \langle Bx,Bu \right \rangle(Ax \otimes Au)}(x)=\{0\}.
\end{align*}
This contradiction shows that there is a nonzero scalar $\gamma \in \mathbb{C}$ such that
$A^{\ast}=\gamma I$.\\

{\bf Claim 7.} $\varphi(T)=\mu TV^{\ast}$ for all $T \in B(H)$, where $V$ is unitary operators and $\mu$ is a nonzero scalar.\\

By claim 5 we shall assume that $A=U$ and $B=V$ for some unitary operators $U, V\in B(H)$. Using the previous claim and $(1)$, for every rank one operator $R\in B(H)$ and every operator $T\in B(H)$ we have
\begin{align*}
H_{\varphi(T)\varphi(R)^{\ast}}(\{\lambda\})&=H_{TR^{\ast}}(\{\lambda\})\nonumber\\
&=H_{UTR^{\ast}U^{\ast}}(\{\lambda\})\nonumber\\
&=H_{UTV^{\ast}VR^{\ast}U^{\ast}}(\{\lambda\})\nonumber\\
&=H_{UTV^{\ast}(URV^{\ast})^{\ast}}(\{\lambda\})\nonumber\\
&=H_{UTV^{\ast}\varphi(R)^{\ast}}(\{\lambda\}).
\end{align*}
Since $\varphi$ preserves rank one operators in both directions, Theorem 2.6 shows that $\varphi(T)=UTV^{\ast}$ for all $T\in B(H)$. Claim 6 tells us that for ever $T\in B(H)$ we have $\varphi(T)=\mu TV^{\ast}$ for some $\mu \in \mathbb{C}$.
\end{proof}

From this result, it is easy to deduce a generalization for the case of two different Hilbert spaces $H, K$.

\begin{corollary}
Suppose $U\in B(H,K)$ be a unitary operator. Let $\varphi$ be an additive map from $B(H)$ into $B(K)$ such that its range contains $F_{2}(K)$. If
$$K_{\varphi(T)\varphi(S)^{\ast}}(\{\lambda\})=UH_{TS^{\ast}}(\{\lambda\}),~~ (T, S \in B(H),   \lambda \in \mathbb{C}).$$
Then there exist a unitary operator $V:H \rightarrow K$ and a nonzero scalar $\mu$ such that $\varphi(T)=\mu UTV^{\ast}$ for all $T \in B(H)$.
\end{corollary}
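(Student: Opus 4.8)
The plan is to reduce the corollary to Theorem~\ref{maintheorem} by conjugating $\varphi$ with the unitary $U$. Concretely, I would introduce the map $\psi : B(H) \to B(H)$ defined by $\psi(T) := U^{\ast}\varphi(T)U$. Since $U$ is unitary with $U^{\ast}U = I_{H}$ and $UU^{\ast} = I_{K}$, and since $\varphi$ is additive, $\psi$ is a well-defined additive self-map of $B(H)$. Moreover, conjugation by the invertible operator $U$ preserves rank exactly, so $\psi(B(H)) = U^{\ast}\varphi(B(H))U \supseteq U^{\ast}F_{2}(K)U = F_{2}(H)$; thus the range of $\psi$ contains $F_{2}(H)$.

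The key tool is the behaviour of the local spectrum under unitary equivalence. For $A \in B(K)$ and $x \in H$, one has $\sigma_{U^{\ast}AU}(x) = \sigma_{A}(Ux)$: indeed, if $f$ is an analytic $H$-valued function on an open set $\Omega \subseteq \mathbb{C}$ with $(\mu I - U^{\ast}AU)f(\mu) = x$ on $\Omega$, then $Uf$ is analytic and $(\mu I - A)(Uf(\mu)) = U(\mu I - U^{\ast}AU)f(\mu) = Ux$ on $\Omega$, and the converse implication is symmetric. Consequently $H_{U^{\ast}AU}(F) = U^{\ast}K_{A}(F)$, equivalently $K_{A}(F) = U\,H_{U^{\ast}AU}(F)$, for every $F \subseteq \mathbb{C}$.

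Applying this with $A = \varphi(T)\varphi(S)^{\ast}$ and using $\psi(S)^{\ast} = U^{\ast}\varphi(S)^{\ast}U$, we obtain $\psi(T)\psi(S)^{\ast} = U^{\ast}\varphi(T)U\,U^{\ast}\varphi(S)^{\ast}U = U^{\ast}\bigl(\varphi(T)\varphi(S)^{\ast}\bigr)U$, hence
$$H_{\psi(T)\psi(S)^{\ast}}(\{\lambda\}) = U^{\ast}K_{\varphi(T)\varphi(S)^{\ast}}(\{\lambda\}) = U^{\ast}U\,H_{TS^{\ast}}(\{\lambda\}) = H_{TS^{\ast}}(\{\lambda\})$$
for all $T, S \in B(H)$ and $\lambda \in \mathbb{C}$, where the middle equality is exactly the hypothesis of the corollary. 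Therefore $\psi$ satisfies all the hypotheses of Theorem~\ref{maintheorem}.

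Theorem~\ref{maintheorem} now supplies a unitary operator $W \in B(H)$ and a nonzero scalar $\mu$ with $\psi(T) = \mu T W^{\ast}$ for all $T \in B(H)$. Unwinding the definition of $\psi$ gives $\varphi(T) = U\psi(T)U^{\ast} = \mu\,U T W^{\ast} U^{\ast} = \mu\,U T (UW)^{\ast}$. Setting $V := UW$, a routine check — $V^{\ast}V = W^{\ast}U^{\ast}UW = I_{H}$ and $VV^{\ast} = UWW^{\ast}U^{\ast} = I_{K}$ — shows that $V : H \to K$ is unitary, and $\varphi(T) = \mu\,UTV^{\ast}$ for all $T \in B(H)$, which is the desired conclusion. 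There is no substantial obstacle in this argument; the only points requiring care are the unitary-equivalence identity for local spectral subspaces recorded above and the verification that the structural hypotheses (additivity and range containing $F_{2}$) transfer from $\varphi$ to $\psi$.
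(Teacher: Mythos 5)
Your proposal is correct and follows essentially the same route as the paper: conjugate $\varphi$ by $U$ to get $\psi(T)=U^{\ast}\varphi(T)U$, verify the hypotheses of Theorem 3.1 transfer (you are in fact more explicit than the paper about the identity $H_{U^{\ast}AU}(F)=U^{\ast}K_{A}(F)$ and about the range containing $F_{2}(H)$), and then unwind $\psi(T)=\mu TW^{\ast}$ to obtain $\varphi(T)=\mu UTV^{\ast}$ with $V=UW$.
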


\begin{proof}
We consider the map $\psi:B(H) \rightarrow B(H)$ defined by $\psi(T)=U^{\ast}\varphi(T)U$ for all $T\in B(H)$.
We have,
\begin{align*}
H_{\psi(T)\psi(S)^{\ast}}(\{\lambda\})&=H_{U^{\ast}\varphi(T)UU^{\ast}\varphi(S)^{\ast}U}(\{\lambda\})\nonumber\\
&=H_{U^{\ast}\varphi(T)\varphi(S)^{\ast}U}(\{\lambda\})\nonumber\\
&=U^{\ast} K_{\varphi(T)\varphi^{\ast}(S)}(\{\lambda\})\nonumber\\
&= H_{TS^{\ast}}(\{\lambda\})
\end{align*}
for every $T, S \in B(H)$ and $\lambda \in \mathbb{C}$. So by Theorem 3.1, there exist a unitary operator $P:H \rightarrow H$ and $\mu \in \mathbb{C}$ such that $\psi(T)=\mu TP^{\ast}$ for all $T \in B(H)$. Therefore $\varphi(T)=\mu UTV^{\ast}$ for all $T \in B(H)$, where $V=UP$.
\end{proof}

In the next theorem, we investigate the form of all maps $\varphi_{1}$ and $\varphi_{2}$ on $B(H)$ such that, for every $T$ and $S$ in $B(H)$, the local spectral subspaces of $TS^{\ast}$ and $\varphi_{1}(T) \varphi_{2}(S)^{\ast}$, associated with the singleton $\{\lambda\}$, coincide.

\begin{theorem}\label{maintheorem}
Let $\varphi_{1}$ and $\varphi_{2}$ be additive maps from $B(H)$ into $B(H)$ which satisfy
\begin{equation}\label{t2}
H_{\varphi_{1}(T)\varphi_{2}(S)^{\ast}}(\{\lambda\})=H_{TS^{\ast}}(\{\lambda\}),~~ (T, S \in B(H),   \lambda \in \mathbb{C}).
\end{equation}
If the range of $\varphi_{1}$ and $\varphi_{2}$ contain $F_{2}(H)$, then $\varphi_{2}(I)^{\ast}$ is invertible, and $\varphi_{1}(T) = T(\varphi_{2}(I)^{\ast})^{-1}$ and $\varphi_{2}(T) =\varphi_{2}(I)^{\ast}T$ for all $T \in B(H)$.
 \end{theorem}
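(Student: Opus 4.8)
The plan is to reduce the two-map problem to the single-map theorem already proved. First I would observe that, exactly as in Claims 1--3 of Theorem 3.1, both $\varphi_1$ and $\varphi_2$ are injective, preserve rank-one operators in both directions, and are in fact linear: for injectivity of $\varphi_1$, if $\varphi_1(T)=\varphi_1(S)$ then $H_{TR^{\ast}}(\{\lambda\})=H_{\varphi_1(T)\varphi_2(R)^{\ast}}(\{\lambda\})=H_{\varphi_1(S)\varphi_2(R)^{\ast}}(\{\lambda\})=H_{SR^{\ast}}(\{\lambda\})$ for all $R\in F_1(H)$, whence $T=S$ by Theorem 2.6; injectivity of $\varphi_2$ is symmetric (fixing $T$ a rank-one operator and letting $R$ vary), and rank-one preservation and homogeneity follow by repeating the arguments of Claims 2 and 3 verbatim, using Theorem 2.4 and Lemma 2.5 for the rank-one part and the dilation identity $H_{\alpha T S^{\ast}}(\{\lambda\})=H_{TS^{\ast}}(\{\lambda/\alpha\})$ for homogeneity.

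Next I would apply Theorem 2.8 to each of $\varphi_1,\varphi_2$ restricted to $F(H)$. A short argument of the type used in Claim 4 rules out the transpose-type form for each map: if, say, $\varphi_1(x\otimes y)=C_1 y\otimes D_1 x$, one tests on an idempotent rank-one operator $x\otimes y$ (with $\langle x,y\rangle=1$) against a suitable $\varphi_2$-image and produces a vector $v$ with $\langle x,y\rangle\neq 0$ forcing simultaneously $\sigma_{x\otimes y}(v)=\{0\}$ and $\sigma^{\ast}_{x\otimes y}(v)\neq\{0\}$, a contradiction via Lemmas 2.3 and 2.5. Hence there are bijective linear maps $A_1,B_1,A_2,B_2$ on $H$ with
\begin{equation*}
\varphi_1(x\otimes y)=A_1 x\otimes B_1 y,\qquad \varphi_2(x\otimes y)=A_2 x\otimes B_2 y,\qquad x,y\in H.
\end{equation*}
Then for rank-one $x\otimes y$ and $u\otimes v$ one computes, using $\varphi_2(u\otimes v)^{\ast}=(A_2 u\otimes B_2 v)^{\ast}=B_2 v\otimes A_2 u$,
\begin{equation*}
\varphi_1(x\otimes y)\varphi_2(u\otimes v)^{\ast}=(A_1 x\otimes B_1 y)(B_2 v\otimes A_2 u)=\langle B_1 y, B_2 v\rangle\, A_1 x\otimes A_2 u,
\end{equation*}
while $(x\otimes y)(u\otimes v)^{\ast}=\langle y,v\rangle\, x\otimes u$; comparing nonzero local spectra at $A_1 x$ and at $x$ via Lemmas 2.3 and 2.5 (as in Claims 5 and 6 of Theorem 3.1) yields the scalar relations $\langle B_1 y,B_2 v\rangle\langle A_1 x,A_2 u\rangle=\langle y,v\rangle\langle x,u\rangle$ for all vectors, and then that $A_2^{\ast}A_1$, $B_2^{\ast}B_1$ are scalar multiples of $I$ with reciprocal scalars, and moreover that $A_1^{\ast}$ is a scalar multiple of $I$.

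Finally I would identify the maps on all of $B(H)$. From the above, $\varphi_1(T)\varphi_2(R)^{\ast}$ and $TR^{\ast}$ differ by the invertible conjugations coming from $A_1,A_2,B_1,B_2$ on rank-one $R$, and Theorem 2.6 (applied with $R$ ranging over $F_1(H)$) pins down $\varphi_1(T)$ and $\varphi_2(T)$ as fixed invertible left/right multiples of $T$; writing $W:=\varphi_2(I)^{\ast}$ and evaluating the resulting identities at $T=I$ shows $W$ is invertible with $\varphi_1(T)=TW^{-1}$ and $\varphi_2(T)=W T$ for all $T\in B(H)$, which is the assertion. The main obstacle I anticipate is the bookkeeping in the second paragraph: unlike the single-map case there is no built-in adjoint symmetry relating $\varphi_1$ to $\varphi_2$, so one must be careful that the sesquilinear identity $\langle B_1 y,B_2 v\rangle\langle A_1 x,A_2 u\rangle=\langle y,v\rangle\langle x,u\rangle$ genuinely forces $A_2^{\ast}A_1$ and $B_2^{\ast}B_1$ to be scalars (a standard rank-one/bilinearity argument) and that the degenerate cases where some inner product vanishes are handled so that the contradiction-producing vectors in the Claim 4-- and Claim 6--type steps can always be chosen; once that is done the passage from $F(H)$ to $B(H)$ via Theorem 2.6 is routine.
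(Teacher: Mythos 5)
Your overall skeleton (analyze both maps on rank-one operators via Theorem 2.8 and finish with Theorem 2.6) could in principle be pushed through, but as written it has genuine gaps. First, the step you yourself flag as delicate is not closable by the tools you invoke: the scalar identity $\langle B_1y,B_2v\rangle\langle A_1x,A_2u\rangle=\langle y,v\rangle\langle x,u\rangle$ does \emph{not} force $A_1^{\ast}$ (nor $A_2$) to be a scalar multiple of $I$. Indeed, take any bounded invertible $Q,W$ and set $\varphi_1(T)=QTW$, $\varphi_2(S)=(Q^{\ast})^{-1}S(W^{\ast})^{-1}$; then $A_1=Q$, $B_1=W^{\ast}$, $A_2=(Q^{\ast})^{-1}$, $B_2=W^{-1}$ satisfy your identity for all vectors, while $A_1$ is an arbitrary invertible operator. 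What kills the conjugation is the hypothesis used as an equality of \emph{subspaces} (equivalently, of local spectra at one and the same vector): testing $T=x\otimes y$, $S=u\otimes v$ at the vector $x$ with $\langle x,u\rangle\neq0$ and $\langle x,A_2u\rangle=0$ gives, via Lemmas 2.3 and 2.5, that $A_2$ is scalar, and comparing the eigenspaces $H_{TS^{\ast}}(\{\mu\})=\mathbb{C}x$ and $\mathbb{C}A_1x$ for the nonzero eigenvalue gives $A_1$ scalar; neither statement follows from the displayed sesquilinear relation, and the appeal to ``Claims 5 and 6 of Theorem 3.1'' does not transfer verbatim because there a single map occupies both factors. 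Second, your claim that Claims 1--3 apply ``verbatim'' to $\varphi_2$ is inaccurate: Theorems 2.4 and 2.6 are one-sided (the operator being tested sits to the \emph{left} of the varying rank-one factor), whereas for $\varphi_2$ the tested operator $\varphi_2(R)^{\ast}$ sits on the right and the rank-one factor on the left, so you would need mirrored versions of both results (true, but requiring proofs the paper does not supply). Finally, if you carry your own computation through with $A_1$ scalar, what comes out is $\varphi_2(T)=T\varphi_2(I)$, a \emph{right} multiplication; asserting $\varphi_2(T)=WT$ in your last paragraph is an additional unproved (and problematic) step.

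For comparison, the paper avoids all of this by never analyzing $\varphi_2$ structurally. It treats only $\varphi_1$ (your first paragraph, restricted to $\varphi_1$, matches its Claims 1--2), proves the identity $\langle x,y\rangle=\langle Ax,\varphi_2(I)By\rangle$, deduces from it (closed graph theorem plus $I=A^{\ast}\varphi_2(I)B$) that $\varphi_2(I)^{\ast}$ is invertible, and then finishes in one stroke with Theorem 2.7: the additive maps $T\mapsto\varphi_1(T)\varphi_2(I)^{\ast}$ and $T\mapsto\varphi_1(I)\varphi_2(T^{\ast})^{\ast}$ preserve local spectral subspaces (take $S=I$, respectively replace the first argument by $I$), hence equal the identity. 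That route bypasses Theorem 2.8 for $\varphi_2$, the scalarity of $A_1,A_2$, and the boundedness questions for $B_1,B_2$ that your plan would still have to address.
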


\begin{proof}
The proof is rather long and we break it into several claims.\\

{\bf Claim 1.} $\varphi_{1}$ is a one to one linear map preserving rank one
operators in both directions.\\

Similar to the proof of Theorem 3.1, we can shows that $\varphi_{1}$ is a one to one linear map preserving rank one
operators in both directions.\\

{\bf Claim 2.} There are bijective linear mappings $A:H \rightarrow H$ and $B : H \rightarrow H$
such that $\varphi_{1}(x \otimes y) = Ax \otimes By$ for all $x, y \in H$.\\

By the claim 1 $\varphi_{1}:F(H) \rightarrow F(H)$ is a bijective linear map which preserves
rank one operators in both directions. Thus by Theorem 2.8, $\varphi_{1}$ has one of the following forms.\\
$(1)$ There exist bijective linear
maps $A: H \rightarrow H$ and $B:H \rightarrow H$ such that
\begin{equation}\label{e3}
\varphi_{1}(x \otimes y) = Ax \otimes By, ~~~ x, y \in H.
\end{equation}

$(2)$ There exist bijective linear maps $C:H \rightarrow H$ and $D:H \rightarrow H$ such
that
\begin{equation}\label{e4}
\varphi_{1}(x \otimes y) = Cy \otimes Dx,~~  x, y \in H.
\end{equation}

Assume that $\varphi_{1}$ takes the form $(7)$. Let $y$ be a nonzero vector in $H$, choose a nonzero vector $v\in H$
such that $\left \langle \varphi_{2}(I)^{\ast}y,v \right \rangle=0$. Set $x=D^{-1}v$, since $x$ and $y$ are nonzero
vectors in $H$, there exists a vector $u \in H$ such that $\left \langle y,u \right \rangle \neq0$ and $\left \langle x,u \right \rangle \neq0$. Since $\sigma^{*}_{(Cu \otimes v) \varphi_{2}(I)^{\ast}}(y)=\{0\}$, we have
$$y \in H_{(Cu \otimes v) \varphi_{2}(I)^{\ast}}(\{0\})=H_{(Cu \otimes Dx) \varphi_{2}(I)^{\ast}}(\{0\})=H_{x \otimes u}(\{0\}).$$
 Using Lemma 2.5, $\sigma_{x \otimes u}(y)=\{0\}$. But lemma 2.3 implies that $$\sigma^{*}_{x \otimes u}(y)=\{\left \langle x,u \right \rangle \}\neq\{0\}.$$
This contradiction shows that $\varphi_{1}$ only takes the form $(6)$.\\

{\bf Claim 3.} For every $x, y\in H$, $\left \langle x,y \right \rangle = \left \langle Ax,\varphi_{2}(I)(By) \right \rangle$.\\

Assume that $x$ and $ y$ are arbitrary vectors in $H$. We have, $\sigma_{x \otimes y}(x)=\{\left \langle x,y \right \rangle\}$, so the previous claim and $(5)$ imply that
$$x\in H_{x \otimes y}(\{\left \langle x,y \right \rangle\})=H_{\varphi_{1}(x \otimes y)\varphi_{2}(I)^{\ast}}(\{\left \langle x,y \right \rangle\})=H_{(Ax \otimes By) \varphi_{2}(I)^{\ast}}(\{\left \langle x,y \right \rangle\}).$$
 Assume first that $\left \langle x,y \right \rangle \neq 0$, using lemma 2.5,
 $$\{0\} \neq \{\left \langle x,y \right \rangle\}=\sigma_{x \otimes y}(x)=\sigma_{\varphi_{1}(x\otimes y)\varphi_{2}(I)^{\ast}}(x)=\sigma_{(Ax \otimes By)\varphi_{2}(I)^{\ast}}(x),$$
 which means that $\left \langle x,\varphi_{2}(I)(By) \right \rangle \neq 0$. Then Lemma 2.3 implies that
$$\{\left \langle x,y \right \rangle\}=\sigma_{x \otimes y}(x)=\sigma_{(Ax \otimes By)\varphi_{2}(I)^{\ast}}(x)=\{\left \langle Ax,\varphi_{2}(I)(By) \right \rangle\}.$$

Now, if $\left \langle x,y \right \rangle=0$, we choose a vector $u \in H$ such that $\left \langle x,u \right \rangle \neq 0$. By application of what has
been shown previously to both $u$ and $x+u$, we have $\left \langle x,u \right \rangle=\left \langle Ax,\varphi_{2}(I)(By) \right \rangle$ and $\left \langle x,y+u \right \rangle=\left \langle Ax,\varphi_{2}(I)(B(u+y)) \right \rangle$. So
\begin{align*}
\left \langle x,y \right \rangle+\left \langle x,u \right \rangle &=\left \langle x,y+u \right \rangle \nonumber\\
&=\left \langle Ax,\varphi_{2}(I)(B(y+u) \right \rangle \nonumber\\
&=\left \langle Ax,\varphi_{2}(I)(By) \right \rangle+\left \langle Ax,\varphi_{2}(I)(Bu) \right \rangle \nonumber\\
&=\left \langle Ax,\varphi_{2}(I)(By) \right \rangle+\left \langle x,u \right \rangle.
\end{align*}
This shows that $\left \langle x,y \right \rangle=\left \langle Ax,\varphi_{2}(I)(By) \right \rangle$ in this case too.\\

{\bf Claim 4.} $\varphi_{2}(I)^{\ast}$ is invertible.\\

It is clear that $\varphi_{2}(I)^{\ast}$ is injective, if not, there is a nonzero vector $y\in H$ such that $\varphi_{2}(I)^{\ast}y=0$. Take $x=A^{-1}y$, and let $u \in H$ be a vector such that $\left \langle x,u \right \rangle=1$. By the previous claim, we have
$1=\left \langle x,u \right \rangle =\left \langle Ax,\varphi_{2}(I)(Bu) \right \rangle=\left \langle y,\varphi_{2}(I)(Bu) \right \rangle=\left \langle \varphi_{2}(I)^{\ast}y,Bu \right \rangle=0$. This contradiction tells us that $\varphi_{2}(I)^{\ast}$ is injective.
Now, we show that $A$ is continuous and $(\varphi_{2}(I))B=(A^{\ast})^{-1}$. Assume that $(x_{n})_{n}$
is a sequence in $H$ such that $\lim_{n\longrightarrow \infty} x_{n}=x\in H$ and $\lim_{n\longrightarrow \infty} Ax_{n}=y\in H$. Then, for every $u\in H$, we have
\begin{align*}
\left \langle y,\varphi_{2}(I)(Bu) \right \rangle&=\lim_{n\longrightarrow \infty} \left \langle Ax_{n},\varphi_{2}(I)(Bu) \right \rangle \nonumber\\
&=\lim_{n\longrightarrow \infty} \left \langle x_{n},u) \right \rangle =\left \langle x,u \right \rangle=\left \langle Ax,\varphi_{2}(I)(By) \right \rangle.
\end{align*}
 $$$$
 Since $B$ is bijective and $u\in H$ is an arbitrary vector, the closed graph theorem shows
that $A$ is continuous. Moreover, we have $\left \langle x,y \right \rangle=\left \langle Ax,\varphi_{2}(I)(By)\right \rangle=\left \langle x,A^{\ast}\varphi_{2}(I)(By) \right \rangle$
for all $x,y \in H$, and thus $I=A^{\ast}\varphi_{2}(I)B$.
It follows that $\varphi_{2}(I)^{\ast}$ is invertible.\\

{\bf Claim 5.} $(A^{\ast})^{-1}$ and $I$ are linearly dependent.\\

Assume, by the way of contradiction, that there exists a nonzero vector $x\in H$ such that $Ax$ and $x$ are linearly independent.
Let $y$ be a vector in $H$ such that $\left \langle x,y \right \rangle=1$ and $\left \langle (A^{\ast})^{-1}x,y \right \rangle=0$. We have
$(x \otimes y)x=x$ and $A(x \otimes y)(A^{\ast})^{-1}x=0$, then $\sigma_{x \otimes y}(x)=\{1\}$ and $\sigma_{A(x \otimes y)(A^{\ast})^{-1}}(x)=\{0\}$. Since
$H_{x \otimes y}(\{\lambda\}) =H_{\varphi_{1}(x \otimes y)\varphi_{2}(I)^{\ast}}(\{\lambda\})$, using Lemma 2.5 and claim 2, we have
\begin{align*}
\{1\}&=\sigma_{x \otimes y}(x)\nonumber\\
&=\sigma_{\varphi_{1}(x \otimes y)\varphi_{2}(I)^{\ast}}(x)\nonumber\\
&=\sigma_{(Ax \otimes By)\varphi_{2}(I)^{\ast}}(x)\nonumber\\
&=\sigma_{(Ax \otimes \varphi_{2}(I)By)}(x)\nonumber\\
&=\sigma_{A(x \otimes y)(A^{\ast})^{-1}}(x)=\{0\}.
\end{align*}
This contradiction shows that there is a nonzero scalar $\alpha \in \mathbb{C}$ such that
$(A^{\ast})^{-1}=\alpha I$.\\

{\bf Claim 6.} $\varphi_{1}$ and $\varphi_{1}$ have the desired forms.\\

We define the map $\psi_{1}: B(H) \rightarrow B(H)$ by $\psi_{1}(T)=\varphi_{1}(T)\varphi_{2}(I)^{\ast}$ for all $T\in B(H)$.
We have,
$$H_{\psi_{1}(T)}(\{\lambda\}) =H_{\varphi_{1}(T)\varphi_{2}(I)^{\ast}}(\{\lambda\})=H_{T}(\{\lambda\})$$
 for all $T \in B(H)$ and $\lambda \in \mathbb{C}$. So by Theorem 2.7, $\psi_{1}(T)=T$ for all $T \in B(H)$, therefore $\varphi_{1}(T)=T(\varphi_{2}(I)^{\ast})^{-1}$ for all $T \in B(H)$. Once again, we consider the map $\psi_{2}:B(H) \rightarrow B(H)$ defined by $\psi_{2}(T)=\varphi_{1}(I)\varphi_{2}(T^{\ast})^{\ast}$ for all $T\in B(H)$.
We see that for all $T \in B(H)$ and $\lambda \in \mathbb{C}$,
$$H_{\psi_{2}(T)}(\{\lambda\})=H_{T}(\{\lambda\}),$$ by Theorem 2.7, $\psi_{2}(T)=T$ for all $T \in B(H)$. Hence $\varphi_{2}(T)=(\varphi_{1}(I))^{-1}T$ for all $T \in B(H)$.
\end{proof}

Theorem 3.3 leads directly to the following corollary.
\begin{corollary}
Suppose $U\in B(H,K)$ be a unitary operator. Let $\varphi_{1}$  and $\varphi_{2}$ be two additive map from $B(H)$ into $B(K)$ which satisfy
$$K_{\varphi_{1}(T)\varphi_{2}(S)^{\ast}}(\{\lambda\})=UH_{TS^{\ast}}(\{\lambda\}),~~  (T, S \in B(H),   \lambda \in \mathbb{C}).$$
If the range of $\varphi_{1}$  and $\varphi_{2}$ contain $F_{2}(K)$, then there exists a bijective linear map $V:K\rightarrow H$ such that $\varphi_{1}(T) = UTV$ and $\varphi_{2}(T) =V^{-1}TU^{\ast}$ for all $T \in B(H)$.
\end{corollary}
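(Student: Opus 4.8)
The plan is to reduce Corollary 3.4 to Theorem 3.3 by conjugating both maps with the unitary $U$, in exact analogy with how Corollary 3.2 was reduced to Theorem 3.1. First I would define additive maps $\psi_{1},\psi_{2}:B(H)\rightarrow B(H)$ by $\psi_{i}(T)=U^{\ast}\varphi_{i}(T)U$; these are additive because each $\varphi_{i}$ is additive and conjugation by $U$ is linear. The goal is to show that the pair $(\psi_{1},\psi_{2})$ satisfies the hypotheses of Theorem 3.3, apply it, and then undo the conjugation.

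To verify the functional equation for $(\psi_{1},\psi_{2})$, I would use the unitary relations $U^{\ast}U=I_{H}$ and $UU^{\ast}=I_{K}$ together with $\bigl(U^{\ast}\varphi_{2}(S)U\bigr)^{\ast}=U^{\ast}\varphi_{2}(S)^{\ast}U$ to compute $\psi_{1}(T)\psi_{2}(S)^{\ast}=U^{\ast}\varphi_{1}(T)\varphi_{2}(S)^{\ast}U$. The key ingredient is the standard intertwining of local spectra under unitary equivalence: for $X\in B(K)$ one has $\sigma_{U^{\ast}XU}(x)=\sigma_{X}(Ux)$ for all $x\in H$, which yields $H_{U^{\ast}XU}(F)=U^{\ast}\bigl(K_{X}(F)\bigr)$ for every $F\subseteq\mathbb{C}$. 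Applying this with $X=\varphi_{1}(T)\varphi_{2}(S)^{\ast}$ and then invoking the hypothesis $K_{\varphi_{1}(T)\varphi_{2}(S)^{\ast}}(\{\lambda\})=U\,H_{TS^{\ast}}(\{\lambda\})$ gives $H_{\psi_{1}(T)\psi_{2}(S)^{\ast}}(\{\lambda\})=U^{\ast}U\,H_{TS^{\ast}}(\{\lambda\})=H_{TS^{\ast}}(\{\lambda\})$. For the range condition, I would note that $R\mapsto URU^{\ast}$ is a rank-preserving bijection of $B(H)$ onto $B(K)$ carrying $F_{2}(H)$ onto $F_{2}(K)$; since the range of $\varphi_{i}$ contains $F_{2}(K)$, given $R\in F_{2}(H)$ we pick $T$ with $\varphi_{i}(T)=URU^{\ast}$ and obtain $\psi_{i}(T)=U^{\ast}(URU^{\ast})U=R$, so the range of $\psi_{i}$ contains $F_{2}(H)$.

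Now Theorem 3.3 applies to $(\psi_{1},\psi_{2})$: setting $W:=\psi_{2}(I)^{\ast}$, the operator $W$ is invertible and $\psi_{1}(T)=TW^{-1}$, $\psi_{2}(T)=WT$ for all $T$. Undoing the conjugation with $UU^{\ast}=I_{K}$ gives $\varphi_{1}(T)=U\psi_{1}(T)U^{\ast}=UT\bigl(W^{-1}U^{\ast}\bigr)$ and $\varphi_{2}(T)=U\psi_{2}(T)U^{\ast}=\bigl(UW\bigr)TU^{\ast}$. I would then set $V:=W^{-1}U^{\ast}\in B(K,H)$, which is bijective because $W$ is invertible and $U$ is unitary; a short bookkeeping check ($V^{-1}=(W^{-1}U^{\ast})^{-1}=UW$, using $(U^{\ast})^{-1}=U$) confirms that the two formulas read exactly $\varphi_{1}(T)=UTV$ and $\varphi_{2}(T)=V^{-1}TU^{\ast}$, with the factors landing in the correct spaces $B(K,H)$ and $B(H,K)$.

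The proof is essentially a clean reduction, so there is no deep obstacle; the only points requiring care, both routine once set up, are the cross-space unitary-conjugation identity $H_{U^{\ast}XU}(F)=U^{\ast}(K_{X}(F))$ for operators on $K$ versus $H$, and the adjoint/domain bookkeeping that verifies $V=W^{-1}U^{\ast}$ produces precisely the asserted forms rather than their transposes.
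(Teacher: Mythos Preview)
Your proposal is correct and follows essentially the same route as the paper: define $\psi_{i}(T)=U^{\ast}\varphi_{i}(T)U$, verify the functional equation via unitary conjugation of local spectral subspaces, apply Theorem~3.3, and unwind to obtain $V$ (the paper writes $V=(UP)^{-1}$ with $P=\psi_{2}(I)^{\ast}$, which coincides with your $V=W^{-1}U^{\ast}$). Your version is in fact slightly more careful, since you explicitly check that the ranges of $\psi_{1},\psi_{2}$ contain $F_{2}(H)$, a point the paper leaves implicit.
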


\begin{proof}
We consider the maps $\psi_{1}:B(H) \rightarrow B(H)$ defined by $\psi_{1}(T)=U^{\ast}\varphi_{1}(T)U$ and $\psi_{2} : B(H) \rightarrow B(H)$ defined by $\psi_{2}(T)=U^{\ast}\varphi_{2}(T)U$ for all $T\in B(H)$.
We have,
\begin{align*}
H_{\psi_{1}(T)\psi_{2}(S)^{\ast}}(\{\lambda\})& =H_{U^{\ast}\varphi_{1}(T)UU^{\ast}\varphi_{2}(S)^{\ast}U}(\{\lambda\})\nonumber\\
&=H_{U^{\ast}\varphi_{1}(T)\varphi_{2}(S)^{\ast}U}(\{\lambda\})\nonumber\\
&=U^{-1} K_{\varphi_{1}(T)\varphi_{2}(S)^{\ast}}(\{\lambda\})
= H_{TS^{\ast}}(\{\lambda\})
\end{align*}
for every $T, S \in B(H)$ and $\lambda \in \mathbb{C}$. So by Theorem 3.3, $\psi_{1}(T)=TP^{-1}$ and $\psi_{2}(T)=PT$ for all $T \in B(H)$, where $P=\psi_{2}(I)^{\ast}$. Therefore $\varphi_{1}(T) = UTV$ and $\varphi_{2}(T)=V^{-1}TU^{-1}$ for all $T \in B(H)$, where $V=(UP)^{-1}$.
\end{proof}


\begin{thebibliography}{99}
\bibitem{Aup} B. Aupetit, \textit{Spectrum-preserving linear mappings between Banach algebras or Jordan–Banach algebras}, J. London
Math. Soc. 62 (2000) 917–924.
\bibitem{Aiena} P. Aiena, \textit{Fredholm and local spectral theory, with applications to multipliers}, Kluwer, Dordrecht, 2004.
\bibitem{Benbouziane} H. Benbouziane, M.E. Kettani, I. Herrou, \textit{Local spectral subspace preservers}. Rend. Circ. Mat. Palermo, II.
Ser. 68 (2019) 293-303.
\bibitem{Benbouziane1} H. Benbouziane, M.E. Kettani, I. Herrou, \textit{Nonlinear maps preserving the local spectral subspace}, Linear Multilinear Algebra. 7 (2019) 29-38.
\bibitem{Bourhim2} A. Bourhim, J. Mashreghi, \textit{A survey on preservers of spectra and local
spectra}, Contemp Math. 45 (2015) 45-98.
\bibitem{Bb} A. Bourhim, J.E. Lee, \textit{Multiplicatively local spectrum-preserving maps}, Linear Algebra Appl. 549 (2018) 291–308.
\bibitem{Bourhim1} A. Bourhim, J. Mashreghi, \textit{Maps preserving the local spectrum of product of operators}, Glasgow Math. J. 57 (2015) 709-718.
\bibitem{Bourhim} Bourhim A, Ransford T. \textit{Additive maps preserving local spectrum}. Integral Equations Operator Theory. 5 (2006) 377-385.
\bibitem{Do} G. Dolinar, S.P. Du, J.C. Hou, P. Legisa, \textit{General preservers of invariant subspace lattices}, Linear
Algebra Appl. 429 (2008) 100-109.
\bibitem{elh} M. Elhodaibi, A. Jaatit, \textit{On additive maps preserving the local spectral subspace}, Int. J. Math.
Anal. 6 (21) (2012) 1045-1051.
\bibitem{fro} G. Frobenius, \textit{Ueber die Darstellung der endlichen Gruppen durch lineare Substitutionen}, Berl Ber. Appl. 203 (1897) 994-1015.
\bibitem{jaf} A.A. Jafarian, \textit{A survey of invertibility and Spectrum-preserving linear maps}, Bulletin of the Iranian Mathematical Society. 35 (2) (2009) 1-10.
\bibitem{jaf2} A.A. Jafarian, A.R. Sourour, \textit{Spectrum-preserving linear maps}, J. Funct. Anal. 66 (1986) 255-261.
\bibitem{jaf1} A.A. Jafarian, A.R. Sourour, \textit{Linear maps that preserve the commutant, double commutant or the lattice
of invariant subspaces}, Linear and Multilinear Algebra. 38 (1994) 117-129.
\bibitem{gli} A.M. Gleason, \textit{A characterization of maximal ideals}. J. Analyse Math. 19 (1967) 171-172.
\bibitem{kah} J.P. Kahane, W. Zelazko, \textit{A characterization of maximal ideals in commutative Banach algebras}. Studia Math. 29 (1968) 339-343.
\bibitem{kap} I. Kaplansky, Algebraic and analytic aspects of operator algebras, Conference Board of the
Mathematical Sciences Regional Conference Series in Mathematics, No. 1. Providence (RI):
American Mathematical Society; 1970.
\bibitem{mar} M. Marcus, B.N. Moyls, \textit{Linear transformations on algebras of matrices}, Canad.
J. Math. 11 (1959) 61-66.
\bibitem{mol} L. Molnar, \textit{Some characterizations of the automorphisms of $B(H)$ and $C(X)$}. Proc Amer Math
Soc. 130 (1) (2002) 111–120.
\bibitem{Laursen} K.B. Laursen, M.M. Neumann, \textit{An introduction to local spectral theory}, London mathematical
society monographs, new series. vol. 20. New York (NY): The Clarendon Press, Oxford
University Press; 2000
\bibitem{om} M. Omladic, P. Semrl, \textit{Additive mappings preserving operators of rank one}, Linear
Algebra Appl. 182 (1993) 239-256.
\bibitem{Sourour} A.R. Sourour, \textit{Invertibility preserving linear maps on L(X)}, Trans.
Amer. Math. Soc. 348 (1996) 13-30.
\bibitem{zel} W. Zelazko, \textit{A characterization of multiplicative linear functionals in complex Banach algebras}.
 Studia Math. 130 (1968) 83-85.
\end{thebibliography}
\end{document}